\let\old@tocline\@tocline
\let\section@tocline\@tocline
\newcommand{\subsection@dotsep}{4.5}
\newcommand{\subsubsection@dotsep}{4.5}
     \leaders\hbox{$\m@th
        \mkern \subsection@dotsep mu\hbox{.}\mkern \subsection@dotsep mu$}\hfill
\let\subsection@tocline\@tocline
\let\@tocline\old@tocline
     \leaders\hbox{$\m@th
        \mkern \subsubsection@dotsep mu\hbox{.}\mkern \subsubsection@dotsep mu$}\hfill
\let\subsubsection@tocline\@tocline
\let\@tocline\old@tocline
\let\old@l@subsection\l@subsection
\let\old@l@subsubsection\l@subsubsection
\def\@tocwriteb#1#2#3{%
  \begingroup
    \@xp\def\csname #2@tocline\endcsname##1##2##3##4##5##6{%
      \ifnum##1>\c@tocdepth
      \else \sbox\z@{##5\let\indentlabel\@tochangmeasure##6}\fi}%
    \csname l@#2\endcsname{#1{\csname#2name\endcsname}{\@secnumber}{}}%
  \endgroup
  \addcontentsline{toc}{#2}%
    {\protect#1{\csname#2name\endcsname}{\@secnumber}{#3}}}%
\newlength{\@tocsectionindent}
\newlength{\@tocsubsectionindent}
\newlength{\@tocsubsubsectionindent}
\newlength{\@tocsectionnumwidth}
\newlength{\@tocsubsectionnumwidth}
\newlength{\@tocsubsubsectionnumwidth}
\newcommand{\settocsectionnumwidth}[1]{\setlength{\@tocsectionnumwidth}{#1}}
\newcommand{\settocsubsectionnumwidth}[1]{\setlength{\@tocsubsectionnumwidth}{#1}}
\newcommand{\settocsubsubsectionnumwidth}[1]{\setlength{\@tocsubsubsectionnumwidth}{#1}}
\newcommand{\settocsectionindent}[1]{\setlength{\@tocsectionindent}{#1}}
\newcommand{\settocsubsectionindent}[1]{\setlength{\@tocsubsectionindent}{#1}}
\newcommand{\settocsubsubsectionindent}[1]{\setlength{\@tocsubsubsectionindent}{#1}}
\renewcommand{\l@section}{\section@tocline{1}{\@tocsectionvskip}{\@tocsectionindent}{}{\@tocsectionformat}}%
\renewcommand{\l@subsection}{\subsection@tocline{2}{\@tocsubsectionvskip}{\@tocsubsectionindent}{}{\@tocsubsectionformat}}%
\renewcommand{\l@subsubsection}{\subsubsection@tocline{3}{\@tocsubsubsectionvskip}{\@tocsubsubsectionindent}{}{\@tocsubsubsectionformat}}%
\newcommand{\@tocsectionformat}{}
\newcommand{\@tocsubsectionformat}{}
\newcommand{\@tocsubsubsectionformat}{}
\def\csname toc@1format\endcsname{\@tocsectionformat}
\def\csname toc@2format\endcsname{\@tocsubsectionformat}
\def\csname toc@3format\endcsname{\@tocsubsubsectionformat}
\newcommand{\settocsectionformat}[1]{\renewcommand{\@tocsectionformat}{#1}}
\newcommand{\settocsubsectionformat}[1]{\renewcommand{\@tocsubsectionformat}{#1}}
\newcommand{\settocsubsubsectionformat}[1]{\renewcommand{\@tocsubsubsectionformat}{#1}}
\newlength{\@tocsectionvskip}
\newcommand{\settocsectionvskip}[1]{\setlength{\@tocsectionvskip}{#1}}
\newlength{\@tocsubsectionvskip}
\newcommand{\settocsubsectionvskip}[1]{\setlength{\@tocsubsectionvskip}{#1}}
\newlength{\@tocsubsubsectionvskip}
\newcommand{\settocsubsubsectionvskip}[1]{\setlength{\@tocsubsubsectionvskip}{#1}}
\patchcmd{\tocsection}{\indentlabel}{\makebox[\@tocsectionnumwidth][l]}{}{}
\patchcmd{\tocsubsection}{\indentlabel}{\makebox[\@tocsubsectionnumwidth][l]}{}{}
\patchcmd{\tocsubsubsection}{\indentlabel}{\makebox[\@tocsubsubsectionnumwidth][l]}{}{}
\newcommand{\@sectypepnumformat}{}
\renewcommand{\contentsline}[1]{%
  \expandafter\let\expandafter\@sectypepnumformat\csname @toc#1pnumformat\endcsname%
  \csname l@#1\endcsname}
\newcommand{\@tocsectionpnumformat}{}
\newcommand{\@tocsubsectionpnumformat}{}
\newcommand{\@tocsubsubsectionpnumformat}{}
\newcommand{\setsectionpnumformat}[1]{\renewcommand{\@tocsectionpnumformat}{#1}}
\newcommand{\setsubsectionpnumformat}[1]{\renewcommand{\@tocsubsectionpnumformat}{#1}}
\newcommand{\setsubsubsectionpnumformat}[1]{\renewcommand{\@tocsubsubsectionpnumformat}{#1}}
\renewcommand{\@tocpagenum}[1]{%
  \hfill {\mdseries\@sectypepnumformat #1}}
\let\oldappendix\appendix
\renewcommand{\appendix}{%
  \leavevmode\oldappendix%
  \addtocontents{toc}{%
    \protect\settowidth{\protect\@tocsectionnumwidth}{\protect\@tocsectionformat\sectionname\space}%
    \protect\addtolength{\protect\@tocsectionnumwidth}{2em}}%
}
\let\oldtableofcontents\tableofcontents
\renewcommand{\tableofcontents}{%
  \vspace*{-\linespacing}
  \oldtableofcontents}
\newtheorem{lemma}{Lemma}[section]
\newtheorem{proposition}[lemma]{Proposition}
\newtheorem{corollary}[lemma]{Corollary}
\newtheorem{theorem}[lemma]{Theorem}
\theoremstyle{definition}
\newtheorem{definition}[lemma]{Definition}
\theoremstyle{remark} 
\newtheorem{remark}[lemma]{Remark}
\newtheorem{example}[lemma]{Example}
\newcommand{\In}{\subset}
\newcommand{\RM}{\backslash}
\newcommand{\into}{\hookrightarrow}
\renewcommand{\d}{\partial}
\def\congto{\xrightarrow{\sim}}
\newcommand{\A}{\mathbb{A}}
\newcommand{\C}{\mathbb{C}}
\newcommand{\G}{\mathbb{G}}
\newcommand{\R}{\mathbb{R}}
\newcommand{\T}{\mathbb{T}}
\newcommand{\Z}{\mathbb{Z}}
\newcommand{\cB}{\mathcal{B}}
\newcommand{\cH}{\mathcal{H}}
\newcommand{\cI}{\mathcal{I}}
\newcommand{\cJ}{\mathcal{J}}
\newcommand{\cO}{\mathcal{O}}
\newcommand{\cX}{\mathcal{X}}
\newcommand{\cY}{\mathcal{Y}}
\newcommand{\core}{\mathfrak{C}}
\newcommand{\fg}{\frak{g}}
\newcommand{\TAo}{\mht_{\operatorname{basic}}}
\newcommand{\TAno}{\TAo^n}
\newcommand{\sz}{z}
\newcommand{\sw}{w}
\newcommand{\mht}{\frak{U}}
\newcommand{\mhtcover}{\widetilde{\mht}}
\newcommand{\chamberset}{\Xi}
\newcommand{\tchamberset}{\widetilde{\chamberset}}
\newcommand{\Bx}{\mathbf{x}}
\newcommand{\nhd}{U}
\newcommand{\GIT}{\theta}
\newcommand{\hypermoment}{\mu^{\cflavor}}
\newcommand{\tBx}{\widetilde{\Bx}}
\newcommand{\sslash}{/\!/\!}
\newcommand{\reg}{\operatorname{reg}}
\newcommand{\OmegaBet}{\Omega}
\newcommand{\rk}{\operatorname{rk}}
\newcommand{\parone}{\beta}
\newcommand{\flavor}{\G}
\newcommand{\flavorparone}{\flavor_\parone}
\newcommand{\cflavor}{G}
\newcommand{\gauge}{\T}
\newcommand{\cgauge}{T}
\newcommand{\cflavorparone}{G_\parone}
\newcommand{\liegauge}{\frak{g}}
\newcommand{\Dtor}{(\C^*)^n}
\newcommand{\Dtordual}{(\C^*)^n}
\newcommand{\Dtordualcpt}{U(1)^n}
\DeclareMathOperator{\Loc}{Loc}
\DeclareMathOperator{\Fuk}{Fuk}
\DeclareMathOperator{\Coh}{Coh}
\DeclareMathOperator{\End}{End}
\DeclareMathOperator{\Spec}{Spec}
\author{Michael McBreen, Vivek Shende, and Peng Zhou}
\title{The Hamiltonian reduction of hypertoric mirror symmetry}
\begin{document}

\begin{abstract}
We give a  `Fukaya category commutes with reduction' theorem for the Hamiltonian torus action on a multiplicative hypertoric variety. 
\end{abstract}

\maketitle

\section{Introduction}

Let us recall two basic but fundamental examples of homological mirror symmetry.

\begin{example} \label{intro pants example}
The pair of pants $\C^* \setminus 1$ is mirror to the coordinate axes $\{xy = 0\}$ \cite{seidel-genustwo, AAEKO, nadler-pants}. 
While extremely simple, this example provides the model case for many investigations in higher dimensional  settings
\cite{seidel-quartic, sheridan-pants, nadler-pants, gammage-shende-hypersurfaces, gammage-shende-fanifold}.  
\end{example}

\begin{example} \label{intro basic MCV example}
$\C^2 \setminus  \{1+xy = 0\}$ is mirror to itself. 
This space contains two exact Lagrangian tori 
(`Clifford' and `Chekanov') whose Floer-theoretic relationship is a first illustration of 
wall crossing or cluster transformation \cite{auroux-anticanonical, seidel-dynamics}.  It correspondingly provides the building block or model calculation for the various appearances of
cluster structures in Fukaya categories, e.g. \cite{gross-hacking-keel-kontsevich, shende-treumann-williams-zaslow}.  
This space is also the basic instance of a multiplicative hypertoric variety, and provides the model for their homological mirror symmetry as established in \cite{gammage-mcbreen-webster}.      
\end{example}

Our purpose in the present article is to explain and generalize a relation between the two examples above.  

\vspace{2mm}

Consider the map
\begin{eqnarray} \label{basic space projection}
\pi: \C^2 \setminus  \{1+xy = 0\} & \to & \C^* \\
(x, y) & \mapsto & 1 + xy \nonumber
\end{eqnarray} 
Note that the regular locus of $\pi$ is $\C^* \setminus 1$, and $\pi^{-1}(1) = \{xy = 0\}$.  
The map $\pi$ can be understood as the quotient by the $\C^*$-action which scales $x$ and $y$ with opposite weights, or equivalently, as the symplectic reduction with respect to the Hamiltonian $U(1)$ action with moment map $|x|^2 - |y|^2$.

\vspace{2mm} 
As suggested by Teleman \cite{teleman-icm}, in the presence of such an action, one may hope for 
a `Fukaya category commutes with reduction' result.

First taking Fukaya category, then taking reduction, 
means the following.  A Hamiltonian action of a Lie group $G$ on a symplectic manifold $\cX$ is expected to correspond to `topological $G$-action on $\Fuk(\cX)$', which gives in particular an enrichment of $\Fuk(\cX)$ over the chains over the loop space of the group, $C_* \Omega G$. For wrapped Fukaya categories of Liouville manifolds, which is the case of interest to us, such a structure was constructed in \cite{oh-tanaka}. 

Let $C_* \Omega G \to \Z$ be the augmentation given by the (monodromies of the) constant rank-one local system $\Z_G$.
`Reduction' means forcing 
$C_* \Omega G$ to act in the same way that it acts on the trivial local system, i.e., forming: 
\begin{equation} \Fuk(\cX)  \otimes_{C_* \Omega G} \Z,\end{equation} 
Note that $C_* \Omega U(1) = \Z[t, t^{-1}]$, and the augmentation map is $t \to 1$.  

 Across a mirror symmetry
equivalence 
$$\Fuk(\cX) = \Coh(\cY),$$
we transport such enrichment to 
$$\Z[t, t^{-1}] \to HH^0(\Fuk(\cX)) = HH^0(\Coh(\cY)) = \Gamma(\cY, \cO_{\cY}).$$
That is, we have a map $\Spec( \Gamma(\cY, \cO_{\cY}) ) \to \G_m$.  Composing
with the affinization morphism $\cY \to \Spec(\Gamma(\cY, \cO_{\cY}))$, we recover a map $\pi_{G \circlearrowright \cX}: \cY \to \G_m$.  
In particular, 
$$\Fuk(\cX) \otimes_{\Z[t, t^{-1}]} \Z = 
\Coh(\cY) \otimes_{\Z[t, t^{-1}]} \Z = 
\Coh(\pi_{G \circlearrowright \cX}^{-1}(1)).$$

Note conversely that from any map $\pi_\cY: \cY \to \G_m$, one obtains an enrichment of $\Coh(\cY)$ over $k[t, t^{-1}]$ by pullback of functions, and, by transport of structure across a mirror symmetry, a corresponding enrichment of $\Fuk(\cX)$.

\vspace{2mm}
Let us consider the mirror symmetry 
$\Fuk(\cX) = \Coh(\cY)$ for
$\cX = \{1+xy = 0\} = \cY$. 
Accept for the moment that $\pi_{U(1) \circlearrowright \cX}: \cY \to \G_m$ is precisely the map $\pi$ given in  
\eqref{basic space projection} above.  Then, the reduction of the Fukaya category is computed by: 
\begin{equation} \Fuk(\C^2 \setminus \{1+xy = 0\}) \otimes_{\Z[t, t^{-1}]} \Z =  \Coh(\pi^{-1}(1)) = \Coh(\{xy = 0\}). 
\end{equation}
That is to say, the reduction of the Fukaya category appearing in Example \ref{intro basic MCV example} gives one side of the mirror symmetry of Example \ref{intro pants example}. 

\vspace{2mm} 

On the other hand, let us contemplate the Fukaya category of the reduction.  A problem: zero is not a regular point for the moment map for the $U(1)$ action, so the reduction does not naturally carry a symplectic form -- and besides, the reduced space is topologically a cylinder, which does not have the correct Fukaya category.  
Instead, we use the following ad-hoc prescription.
\begin{definition}
    Let $\cX /\!/_{reg}\, G$ denote the quotient by $G$ of the smooth part of the moment fiber $\mu_G^{-1}(0)^{sm}/G$.
\end{definition} 
Then
$$\{\C^2 \setminus \{1 + xy = 0 \}\} /\!/_{reg} \, U(1) = \C^* \setminus 1$$
Observe: $\Fuk(\C^* \setminus 1)$ is the other side of the mirror symmetry of Example \ref{intro pants example}. 

\vspace{2mm}
While we do not know how to motivate the operation $/\!/_{reg}$ from first principles (and even less so the prescription of wrapping around the deleted locus), we will show that this prescription works in the more general setting of multiplicative hypertoric varieties.  

Recall that a $2n$ complex dimensional multiplicative hypertoric $\cX$ naturally comes with a Hamiltonian torus action by some $T_\cX$, the symplectic reduction by which coincides with an algebraic map $\pi_{\cX}:  \cX \to T_{\cX, \C}^\vee$.

\begin{theorem} \label{multiplicative main theorem}
Let $\cX$ be a multiplicative hypertoric variety, and $\cY$ its \cite{gammage-mcbreen-webster} mirror, so that
\begin{equation} \label{htms}
\Fuk(\cX) \cong \Coh(\cY)
\end{equation} 
Then there is a Liouville structure on 
$\cX /\!/_{reg} \, T_{\cX}$ such that 
\begin{equation} \label{downstairs}
\Fuk(\cX /\!/_{reg} \, T_{\cX}) = \Coh(\pi_{\cY}^{-1}(1)).
\end{equation}
\end{theorem}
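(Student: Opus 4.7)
My plan is to compute both sides of \eqref{downstairs} via explicit combinatorial (skeletal / microlocal-sheaf) models and match them, organizing the comparison around the ``Fukaya commutes with reduction'' principle sketched in the introduction. Concretely, the Hamiltonian action of $T_\cX$ on $\cX$ endows $\Fuk(\cX)$ with a module structure over $C_*\Omega T_\cX \cong \Z[t_1^{\pm 1}, \ldots, t_n^{\pm 1}]$, and tensoring along the augmentation $t_i \mapsto 1$ should simultaneously compute $\Fuk(\cX /\!/_{reg} T_\cX)$ on the A-side and $\Coh(\pi_\cY^{-1}(1))$ on the B-side, the latter via \eqref{htms}. I would make the comparison skeletally, leveraging the Gammage--McBreen--Webster realization of $\Fuk(\cX) \cong \Coh(\cY)$ as a category of microlocal sheaves on an explicit combinatorial skeleton.

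I would first construct the Liouville structure. For a multiplicative hypertoric the moment map $\mu_{T_\cX}$ is combinatorially explicit, $\mu_{T_\cX}^{-1}(0)$ is a normal-crossings variety, and the smooth quotient is the complement of an affine hyperplane arrangement in a $2n$-dimensional quasi-affine torus. Pulling back the ambient K\"ahler primitive gives a Liouville form; I would then complete this to a Liouville sector by adding stops along the removed deep strata in the pattern dictated by the upstairs Weinstein skeleton of $\cX$. The skeleton of the resulting sector is essentially the $T_\cX$-quotient of the skeleton of $\cX$, which turns the identification of the A-side with the categorical reduction $\Fuk(\cX) \otimes_{\Z[t_1^{\pm 1}, \ldots, t_n^{\pm 1}]} \Z$ into a computation in microlocal sheaves on the quotient arrangement.

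For the B-side, I would verify that the map $\pi_\cY$ in the statement coincides with the algebraic map $\pi_{T_\cX \circlearrowright \cX}: \cY \to T_{\cX,\C}^\vee$ obtained by transporting the $C_*\Omega T_\cX$-action across \eqref{htms}; this reduces to checking that the GMW equivalence sends each monodromy class $t_i$ to the corresponding coordinate function on $T_{\cX,\C}^\vee$, a matter of reading off the combinatorial model. Granting this, the formal identification
\begin{equation*}
\Coh(\cY) \otimes_{\Z[t_1^{\pm 1}, \ldots, t_n^{\pm 1}]} \Z \;\cong\; \Coh(\pi_\cY^{-1}(1))
\end{equation*}
combined with the A-side calculation gives \eqref{downstairs}.

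The principal obstacle is the A-side identification. Zero is not a regular value of $\mu_{T_\cX}$, so the Fukaya category of $\cX /\!/_{reg} T_\cX$ has no off-the-shelf categorical-reduction description and must be accessed by the \emph{ad hoc} prescription ``keep the smooth part, wrap around the singular strata.'' Showing that this prescription produces precisely the augmentation tensor product -- rather than some other deformation of it, or a category with missing or extra generators -- requires a careful skeletal bookkeeping, matching the combinatorics of downstairs wrappings with the effect of imposing $t_i = 1$ on the upstairs microlocal-sheaf category. I expect this to be the heart of the proof, and it is precisely where the explicit combinatorial nature of multiplicative hypertoric varieties is indispensable.
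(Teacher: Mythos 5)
Your overall architecture---compare the two sides over the base torus carrying the arrangement $\cH$, compute the A-side via an explicit skeleton and microsheaves, and do the B-side reduction formally via the moment map---is the paper's, but your plan hinges on two steps the paper neither takes nor needs, and these are exactly where the gaps lie. You organize everything around the geometrically defined $C_*\Omega T_\cX$-module structure and the claim that $\Fuk(\cX \sslash_{reg} T_\cX) \cong \Fuk(\cX)\otimes_{C_*\Omega T_\cX}\Z$, and you yourself flag this claim as the heart while leaving it as ``careful skeletal bookkeeping.'' The paper never proves such a statement (it explicitly says it cannot motivate $\sslash_{reg}$ from first principles) and never uses the geometric action in the proof of Theorem \ref{multiplicative main theorem}. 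The only module structure used there is the algebraic central action $\Z[L]\to\cB(\cflavorparone^\vee,\cH)$ on the GMW combinatorial model: one shows the center of the Beilinson algebra $\cB$ is $\Z[s,s^{-1}]$ and that $\cB\otimes_{\Z[s,s^{-1}]}\Z\cong\cB_0$ (Propositions \ref{center of beilinson quiver}, \ref{reduction of beilinson quiver}), globalizes this over $\cH$, quotes from \cite{gammage-mcbreen-webster} (Theorem \ref{GMW B-model}) that this central action matches pullback along the moment map on $\Coh(\mht_{1,\theta})$, and---crucially---computes the downstairs side \emph{independently}: the reduced space is $\flavorparone^{\vee,\circ}$, i.e.\ $T^*\cflavorparone^\vee$ minus the conormals to the real arrangement, whose microsheaves push forward to $\cB_0(\cflavorparone^\vee,\cH)$ (Proposition \ref{downstairs microsheaves}). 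The theorem then follows by comparing two computed answers; no ``Fukaya commutes with reduction'' statement for the geometric action is invoked. If you insist on routing through $C_*\Omega T_\cX$, you additionally need the comparison between that action and the quiver's central action, which in the paper is a separate result (Theorem \ref{theorem right action}, Section \ref{section right action}) not used here; so your route is strictly harder, and its key A-side step is unsupplied.

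Second, your construction of the Liouville structure does not work as stated: restricting the ambient primitive to $T^*M\setminus\bigcup T^*H_i$ leaves the deleted conormals at finite distance, so the complement is not a Liouville manifold and there is no boundary at infinity along which to place stops. The paper instead interpolates the standard form $y\,dx$ with a small multiple of $\rho\,d\theta$ near each deleted conormal, pushing the removed locus to infinity; the resulting skeleton is locally $\Lambda_{-O-}^k\times\R^{\dim M-k}$, and it is precisely the linking circles in this skeleton that implement the ``wrapping around the deleted locus'' and force the downstairs category to be $\cB_0$ (two nodes with $xy=0=yx$) rather than some other deformation. Relatedly, your heuristic that the downstairs skeleton is the $T_\cX$-quotient of the upstairs skeleton is not accurate and is not how the comparison is made: both sides are pushed forward to the common base $\cflavorparone^\vee$ and compared there as constructible (co)sheaves of categories.
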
 

We prove Theorem \ref{multiplicative main theorem} by passing via \cite{GPS3} to microsheaves on skeleta for both $\cX$ and $\cX /\!/_{reg} \, T_{\cX}$. 
Ultimately we will show that 
 both skeleta map to a hyperplane arrangement, and that the pushforwards of the respective categories of microsheaves can be  related as expected by the
$\Z[T_{\cY, \C}^\vee]$-linear structure 
translated through \eqref{htms}.
We first give an  algebraic description of the  resulting structures and their relations in Section \ref{two sheaves}.
The calculation of skeleta and microsheaves
for $\cX$ was done in \cite{gammage-mcbreen-webster}; we recall the results in Section \ref{section mht}. 
In Section \ref{section downstairs skeleton} we calculate the skeleton and microsheaves for $\cX /\!/_{reg} \, T_{\cX}$ along similar lines as \cite{gammage-shende-hypersurfaces, Zhou-skeleton, gammage-shende-fanifold}. 
Finally we collect the results and prove the theorem in Section \ref{section proof of main theorem}. 

\vspace{2mm}
To understand \ref{multiplicative main theorem} as a `Fukaya category commutes with reduction' result, we would like to know that the $\Z[T_{\cY, \C}^\vee]$-linear structure imported via mirror symmetry agrees with some intrinsically defined $C_* \Omega T_{\cX} = \Z[T_{\cX, \C}^\vee]$-linear structure on the Fukaya category.  (This makes sense because under the mirror symmetry from \cite{gammage-mcbreen-webster}, there is a natural indentification of $T_{\cY, \C}^\vee = T_{\cX, \C}^\vee$.) 

Such a structure is constructed in \cite{oh-tanaka} for Hamiltonian group actions on Liouville manifolds. We will sketch in Section \ref{section right action} a similar construction of the action which interacts more straightforwardly with the sheaf-Fukaya comparison of \cite{GPS3}.  In terms of this action, we prove:

\begin{theorem} \label{theorem right action}
    The mirror symmetry of \cite{gammage-mcbreen-webster} intertwines the  $\Z[T_{\cX, \C}^\vee]$ enrichment of $\Fuk(\cX)$ with the $\Z[T_{\cY, \C}^\vee]$ enrichment of $\Coh(\cY)$. 
\end{theorem}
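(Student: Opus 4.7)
The plan is to reduce Theorem \ref{theorem right action} to a comparison of actions at the level of microsheaves, where both sides of mirror symmetry admit a common combinatorial description over a hyperplane arrangement. We begin by invoking the construction sketched in Section \ref{section right action}: given the Hamiltonian $T_\cX$-action on $\cX$, each loop $\gamma \in \pi_1(T_\cX) = T_{\cX,\C}^\vee$ lifts to a Hamiltonian isotopy of $\cX$, and the induced natural automorphism of the identity functor of $\Fuk(\cX)$ assembles into the ring map $\Z[T_{\cX,\C}^\vee] \to HH^0(\Fuk(\cX))$. By design this map is compatible with the sheaf-Fukaya comparison of \cite{GPS3}: on the microsheaf model $\mu\mathrm{sh}(\mathfrak{L}_\cX)$ recalled in Section \ref{section mht}, the loop $\gamma$ acts by the monodromy endomorphism coming from the induced action of $T_\cX$ on the skeleton $\mathfrak{L}_\cX$.

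The second step is to transport this action across mirror symmetry and match it with pullback along $\pi_\cY$. The functor of \cite{gammage-mcbreen-webster} realizes $\Fuk(\cX) \cong \Coh(\cY)$ microlocally over a common real hyperplane arrangement, so that both sides are assembled from a constructible sheaf of categories with the same base. It therefore suffices to check compatibility locally on this base. Under the local description, $\Coh(\cY)$ restricted to a chamber becomes modules over a commutative ring which is an explicit localization of $\Z[T_{\cY,\C}^\vee]$, and the $\Z[T_{\cY,\C}^\vee]$-enrichment coming from $\pi_\cY$ is literally the defining action. Thus the task reduces to identifying the $T_\cX$-monodromy on microsheaves over each chamber with this tautological character action.

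The key calculation is done on a set of chamber-generators: for an object $E$ supported over a chamber $c$, one computes the action of $\gamma$ on $E$ on both sides. On the coherent side, $\pi_\cY^*\chi_\gamma$ acts on $E$ by the scalar $\chi_\gamma$ evaluated at the point of $T_{\cY,\C}^\vee$ recording $c$. On the Fukaya side, the Hamiltonian flow generated by $\gamma$ sends the Lagrangian brane corresponding to $E$ to a Hamiltonian-isotopic brane, and tracking the resulting continuation element through the explicit hypertoric skeleton yields the same multiplicative factor $\chi_\gamma$. Matching these identifications verifies the theorem on generators, and by $\Z[T_{\cX,\C}^\vee]$-linearity it extends to the whole category.

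The main obstacle is the compatibility of the microsheaf presentation with the torus action: one must verify that the $T_\cX$-action on $\cX$ lifts to an action on the skeleton $\mathfrak{L}_\cX$ for which the sheaf-Fukaya equivalence is equivariant. In the multiplicative hypertoric setting this is facilitated by the explicit description of $\mathfrak{L}_\cX$ as a branched cover of a hyperplane arrangement carrying a canonical $T_\cX$-torsor structure, so that loops in $T_\cX$ act by deck transformations and the monodromy formula above becomes transparent. Once this equivariance is in place together with the construction of Section \ref{section right action}, the theorem follows from the chamber-by-chamber matching of generators outlined above.
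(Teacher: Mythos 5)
Your overall strategy---reduce to a local check over the hyperplane arrangement, using the compatibility of the \cite{gammage-mcbreen-webster} equivalence with the central structures---is the same shape as the paper's argument, but the two places where the actual content lives are asserted rather than proved, and one of them is asserted incorrectly. The key calculation cannot be a scalar computation on ``chamber generators.'' Over a wall of the arrangement the stalk category is Beilinson's gluing category $\cB$-mod, and the central elements through which the enrichment acts are $t = 1+yx$ and $\tau = 1+xy$: on any object with nonvanishing vanishing cycles these are genuinely non-scalar endomorphisms (this non-semisimplicity at the walls is exactly the point of the quiver description). So the claim that on the coherent side $\pi_\cY^*\chi_\gamma$ acts on a generator ``by the scalar $\chi_\gamma$ evaluated at the point of $T_{\cY,\C}^\vee$ recording $c$'' is false for the objects that actually generate, and a check only over open chambers does not by itself pin down the action unless you already know it is central and compatible with the (co)sheaf structure across the walls---which is precisely the wall analysis you are skipping. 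The paper's proof does this work by choosing a $U(1)^n$-invariant sectorial cover whose local models are $(\C^*\subset T^*\C^*)^a\times(\C\sqcup T^*_0\C\subset T^*\C)^{n-a}$ and identifying, in Example \ref{equivariance calculation}, the loop action with multiplication by $t$ and $\tau$ on the nodes of $\cB$; the theorem then follows by combining Theorem \ref{GMW mush to quiver}, the centrality statement \eqref{central action on B}, and the compatibility in Theorem \ref{GMW B-model}. Your sentence ``tracking the resulting continuation element through the explicit hypertoric skeleton yields the same multiplicative factor'' is exactly the statement that needs proof, and it is not a multiplicative factor at the walls.

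Two further gaps. First, the compatibility of the geometrically defined $C_*\Omega T_\cX$-action with the sheaf--Fukaya comparison of \cite{GPS3} is not ``by design'': constructing an action on $\Fuk(\cX)$ that one can transport to microsheaves is itself a nontrivial construction (the local system of Fukaya categories over $BAut(\cX)$, or the \cite{oh-tanaka} structure), and one must restrict to automorphisms preserving the Weinstein/Maslov data and use invariance of the sectorial cover to invoke functoriality of \cite{GPS3}; this occupies most of Section \ref{section right action} and cannot be waved through. Second, your description of the skeleton as ``a branched cover of a hyperplane arrangement carrying a canonical $T_\cX$-torsor structure, so that loops in $T_\cX$ act by deck transformations'' is not correct: the deck transformations of $\mhtcover_{\parone,0}$ are given by $\liegauge^\vee_\Z \cong \pi_1(\mht_{\parone,0})$, not by loops in the compact torus, and the $T_\cX$-orbits in the skeleton degenerate over the walls (the fibers there are not torsors), which again is exactly where the monodromy action becomes the nontrivial operators $t,\tau$ rather than a deck translation or a character value.
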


\vspace{2mm}
{\bf Acknowledgements.}
This work is a spin-off of ongoing joint work with Mina Aganagic around mirror symmetry for Coulomb branches of quiver gauge theories (of which the hypertoric varieties are the `abelian' cases), developing from her works \cite{aganagic-knotcat-I, aganagic-knotcat-II}. We thank her for many discussions and much input.  In particular, the main results of the article stem from conjectures made in  collaboration with Aganagic circa 2019.  Let us also note that during the (lengthy) genesis of this article, other authors have independently arrived at related ideas \cite{Lekili-Segal, LLLM}.

V.S. is supported by Novo Nordisk Foundation grant NNF20OC0066298, Villum Fonden Villum Investigator grant 37814, and Danish National Research Foundation grant DNRF157.

\section{Two sheaves of categories on the line} \label{two sheaves}

\begin{definition}
There is a constructible cosheaf of algebras $\cB_0$ on the stratified space $(\R, 0)$ with stalk at the origin 
given by  
$$Paths( \bullet \overset{x}{\underset{y}\rightleftarrows} \bullet) / (xy = 0 = yx)$$
and the corestrictions to right and left
are given by inclusion of the nodes. 
\end{definition}

\begin{definition}
There is a constructible cosheaf of algebras $\cB$ on the stratified space $(\R, 0)$ with stalk at the origin 
given by:
$$Paths( t \, \rotatebox[origin=c]{270}{$\circlearrowleft$} \,\, \bullet \overset{x}{\underset{y}\rightleftarrows} \bullet \,\, \rotatebox[origin=c]{90}{$\circlearrowright$} \, \tau) 
[t^{-1}, \tau^{-1}] / (t = 1+yx, \tau = 1 + xy) $$
and the corestrictions to left and right given by the inclusion of subalgebras $\Z[t, t^{-1}]$ and $\Z[\tau, \tau^{-1}]$. 

This quiver was originally introduced by Beilinson in order to organize properties of nearby and vanishing cycles \cite{Beilinson-glue}, or correspondingly to describe the category of sheaves on the $\C$, constructible with respect to the stratification $\C = \C^* \sqcup 0$. 
\end{definition} 

\begin{proposition} \label{center of beilinson quiver}
There is an isomorphism from $\Z[s, s^{-1}]$ to the center of $\cB$, restricting  on one side of the origin to the isomorphism
$\Z[s, s^{-1}] = \Z[t, t^{-1}]$ via $s \to t$, and on the other side correspondingly with $s \to \tau$. 
\end{proposition}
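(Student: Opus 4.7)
The plan is to reduce everything to a computation at the stalk at the origin, since away from the origin $\cB$ is already commutative and the claimed restrictions amount to identifying $s$ with the unique generator on each side. The candidate central element is $s := t + \tau \in \cB_0$, or equivalently $s = 1 + yx + xy$ (viewing $t = e_1 + yx$ at the left vertex and $\tau = e_2 + xy$ at the right). Centrality is checked on generators: commutation with $e_1, e_2, t, \tau$ is tautological, and commutation with $x, y$ reduces to the two one-line identities
\[
\tau x \;=\; (1+xy)x \;=\; x + xyx \;=\; x(1+yx) \;=\; x t, \qquad ty \;=\; y\tau.
\]
Invertibility of $s$ is automatic: $s^{-1} := t^{-1} + \tau^{-1}$ works, because the cross terms $t\tau^{-1}$ and $\tau t^{-1}$ vanish for idempotent reasons, leaving $s\,s^{-1} = e_1 + e_2 = 1$. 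This gives a ring map $\Z[s,s^{-1}] \to Z(\cB_0)$ sending $s \mapsto t + \tau$.

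Next I would show this map is an isomorphism onto the center. Decompose $\cB_0 = \bigoplus_{i,j} e_i \cB_0 e_j$; commutation with $e_1$ forces any central element to have vanishing off-diagonal components, so it has the form $p(t) + q(\tau)$ with $p,q \in \Z[t,t^{-1}]$. Commutation with $x$ now gives $q(\tau)x = x p(t)$; iterating $\tau x = xt$ (and inverting, which is legal since $t, \tau$ are units) yields $\tau^n x = x t^n$ for all $n \in \Z$, and so $q = p$. Hence $Z(\cB_0) = \{p(t) + p(\tau) : p \in \Z[t,t^{-1}]\}$, which is exactly the image of $\Z[s,s^{-1}]$; injectivity is immediate by projecting to $e_1 \cB_0 e_1 = \Z[t,t^{-1}]$.

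Finally, I would verify the compatibility with corestrictions. The corestriction $\Z[t,t^{-1}] \to \cB_0$ sends $t^n$ to the loop $t^n$ (supported at $v_1$); acting by the central element $s = t + \tau$ gives $(t+\tau)(t^n e_1) = t^{n+1} e_1$, since $\tau e_1 = 0$. So $s$ specializes to $t$ on the left stratum, and symmetrically to $\tau$ on the right, as claimed. There is no real obstacle here: the only non-formal ingredients are the identities $\tau x = xt$ and $ty = y\tau$, and the fact that these extend to integer powers thanks to the inversion of $t$ and $\tau$ built into the definition of $\cB_0$.
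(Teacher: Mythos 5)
Your argument is correct and takes essentially the same route as the paper: you exhibit the map $f(s)\mapsto f(t)e_1+f(\tau)e_2$, check centrality via $\tau x = xt$ and $ty=y\tau$, and prove surjectivity by showing a central element is diagonal of the form $p(t)+q(\tau)$ and then cancelling an arrow to force $p=q$ (the paper instead sandwiches $f(t)y=yg(\tau)$ with $x$ and compares powers of $xy$; both cancellation steps rest, at the same implicit level of rigor, on the standard freeness of the relevant components $e_i\cB e_j$ over $\Z[t,t^{-1}]$). One cosmetic caveat: your use of ``$\cB_0$'' for the stalk of $\cB$ at the origin clashes with the paper's $\cB_0$, which denotes the different cosheaf with relations $xy=0=yx$.
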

\begin{proof}
An element of the center is a natural transformation from the identity to itself; i.e. an endomorphism of each object, commuting with all maps of objects.  Given
any element $f(s) \in \Z[s, s^{-1}]$, one can obtain such an automorphism is given by multiplying by $f(t)$ on the left node and $f(\tau)$ on the right.  
The relations imply that this commutes with all possible paths.  

Let us see this is an isomorphism.  Let $z$ be a central element, and let $e_t$ be the idempotent at the first vertex. We have $ze_t = f(t)e_t$ for some $f(t) \in \Z[t,t^{-1}]$. Similarly $z e_\tau = g(\tau) e_\tau$ for some $g(\tau) \in \Z[\tau,\tau^{-1}]$. Now consider the product $f(t)y=zy=yz=yg(\tau)$. Thus $xf(1+yx)y = xyg(1 + xy)$. Expanding in powers of $xy$, we find $f = g$.
\end{proof}

\begin{proposition} \label{reduction of beilinson quiver} 
Consider the map $\Z[s, s^{-1}] \to \Z$ sending $s \mapsto 1$.  Using this map, one has
$$\cB \otimes_{\Z[s, s^{-1}]} \Z   = \cB_0$$
\end{proposition}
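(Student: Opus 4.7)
The plan is to verify the isomorphism of cosheaves stalkwise. A constructible cosheaf on $(\R, 0)$ is determined by its values on the three strata $\R_{<0}$, $\{0\}$, $\R_{>0}$ together with the corestriction maps from the open rays into the origin; since tensor product of modules is right exact and commutes with colimits, the functor $- \otimes_{\Z[s, s^{-1}]} \Z$ commutes with passage to stalks and with the cosheaf structure maps. So it will suffice to compute the tensor product on the three stalks and then check that the resulting corestrictions agree with those of $\cB_0$.

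On the open rays the computation is immediate. On $\R_{<0}$, the stalk of $\cB$ is $\Z[t, t^{-1}]$ and by Proposition~\ref{center of beilinson quiver} the action of $s$ is by $t$, so
\[ \Z[t, t^{-1}] \otimes_{\Z[s, s^{-1}]} \Z \;=\; \Z[t, t^{-1}]/(t-1) \;=\; \Z, \]
matching the stalk of $\cB_0$; the right ray is symmetric.

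At the origin, Proposition~\ref{center of beilinson quiver} tells us that $s$ acts on the Beilinson algebra as $t \cdot e_t + \tau \cdot e_\tau$. Imposing $s = 1$ therefore forces $t = 1$ and $\tau = 1$ in each vertex subalgebra, which via the defining relations $t = 1 + yx$ and $\tau = 1 + xy$ is equivalent to $yx = 0$ and $xy = 0$; the invertibility hypotheses on $t, \tau$ become vacuous. To confirm that the quotient is exactly $\cB_0|_0$ and nothing smaller, I would set up a normal form for the Beilinson algebra: the commutation identities $\tau x = xt$ and $t y = y \tau$ (immediate consequences of $\tau = 1 + xy$ and $t = 1 + yx$) show that it is free as a $\Z[t, t^{-1}]$-module on the basis $\{e_t, e_\tau, x, y\}$, and tensoring with $\Z$ over $\Z[s, s^{-1}]$ collapses this to the free $\Z$-module on these four generators, which is precisely $\cB_0|_0$.

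Finally I would verify that the corestrictions match. In $\cB$, the corestriction from $\Z[t, t^{-1}]$ on $\R_{<0}$ into the origin stalk is the subalgebra inclusion at the left node; after tensoring with $\Z$ via $t \mapsto 1$, this descends to the map $\Z \to \cB_0|_0$ sending $1 \mapsto e_t$, which is exactly the corestriction in $\cB_0$. The right ray is analogous. The only real (if minor) obstacle in the plan is the normal-form computation in the origin stalk: it is routine but must be carried out carefully to ensure that no relations beyond $xy = 0 = yx$ slip in when passing to the tensor product.
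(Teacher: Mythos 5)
Your argument is correct; the stalkwise check, the identification of the ideal $(s-1)$ with $(xy, yx)$ via $t=1+yx$, $\tau=1+xy$, and the rank-four normal form over the center all hold up, and this is exactly the routine verification the paper compresses into the single word ``Obvious.'' No genuine gap: the only point to state carefully is that the four generators $e_t, e_\tau, x, y$ freely generate the origin stalk over the central $\Z[s,s^{-1}]$ (acting as $t$ or $\tau$ according to the component), which you have.
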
 
\begin{proof}
Obvious.
\end{proof}

Consider now a smooth manifold $M$ and simple normal crossing co-oriented arrangement of closed codimension one submanifolds
$\cH \subset M$.  Then there is naturally associated constructible cosheaf of algebras $\cB_0(M, \cH)$ which, locally at the intersection of $n$ hyperplanes, 
is a pullback of $\cB_0^{\boxtimes n}$.

To correspondingly globalize $\cB$, it is natural to ask that $M$ is an integral affine manifold; in particular carrying a local system $L$ of lattices, 
and isomorphism $T^*M \cong L \otimes \R$. 
In addition, we demand that at any stratum, the co-orienting co-vectors to the hyperplanes passing through the stratum form part of an integer basis for $L$.   This data allows us in a natural way to define 
$\cB(M, \cH)$, which, locally at an intersection of $n$ hyperplanes, is isomorphic to $\cB^{\boxtimes n} \boxtimes \Z[s, s^{-1}]^{\boxtimes \dim M - n}$, 
and which admits a central map 
\begin{equation} \label{central action on B}
\Z[L] \to \cB(M, \cH)
\end{equation}

Globalizing Proposition \ref{reduction of beilinson quiver} gives

\begin{corollary} \label{global algebra cosheaf reduction} 
$\cB(M, \cH) \otimes_{\Z[L]} \Z = \cB_0(M, \cH)$.
\end{corollary}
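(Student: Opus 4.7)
The plan is to reduce the global statement to the local one (Proposition \ref{reduction of beilinson quiver}) using the cosheaf-local description of both sides. The relative tensor product of a cosheaf of $\Z[L]$-algebras with $\Z$ is computed stratum by stratum, since taking costalks commutes with colimits and the tensor product is itself a colimit; so it suffices to identify the two sides in a neighborhood of each stratum, and then to check that these local identifications are compatible with the cosheaf corestrictions.

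Fix a stratum of codimension $n$ where hyperplanes $H_1, \dots, H_n$ meet, with co-orienting covectors $v_1, \dots, v_n$. By hypothesis $v_1, \dots, v_n$ extend to an integer basis $v_1, \dots, v_{\dim M}$ of the stalk of $L$, which trivializes $\Z[L]$ as $\Z[s_1^{\pm 1}, \dots, s_{\dim M}^{\pm 1}]$.  By the definition of the map \eqref{central action on B}, the resulting central map factors through the local product decomposition: for $i \leq n$ the generator $s_i$ maps to the central element $s$ of the $i$-th $\cB$ factor of $\cB^{\boxtimes n}$ provided by Proposition \ref{center of beilinson quiver}, while for $i > n$ it maps to the generator of the corresponding $\Z[s^{\pm 1}]$ spectator factor. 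Since the tensor product along $\Z[L]$ distributes over a box product along a factorization of $\Z[L]$, we obtain
\[
\bigl(\cB^{\boxtimes n} \boxtimes \Z[s^{\pm 1}]^{\boxtimes (\dim M - n)}\bigr) \otimes_{\Z[L]} \Z \;\cong\; \bigl(\cB \otimes_{\Z[s^{\pm 1}]} \Z\bigr)^{\boxtimes n} \boxtimes \bigl(\Z[s^{\pm 1}] \otimes_{\Z[s^{\pm 1}]} \Z\bigr)^{\boxtimes (\dim M - n)}.
\]
Applying Proposition \ref{reduction of beilinson quiver} to the first $n$ factors gives $\cB_0^{\boxtimes n}$, while the remaining factors each collapse to $\Z$, recovering the prescribed local model $\cB_0^{\boxtimes n}$ of $\cB_0(M, \cH)$.

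The main thing to verify is that these local isomorphisms glue, i.e.\ that they intertwine the corestrictions of $\cB(M, \cH) \otimes_{\Z[L]} \Z$ with those of $\cB_0(M, \cH)$ when passing from a codimension-$n$ stratum to an adjacent lower-codimension stratum. This amounts to observing that specializing a spectator factor $\Z[s^{\pm 1}]$ at $s = 1$ produces $\Z$, which is exactly the spectator factor appearing in $\cB_0$; and that under Proposition \ref{reduction of beilinson quiver} the subalgebras $\Z[t^{\pm 1}]$ and $\Z[\tau^{\pm 1}]$ of $\cB$ (through which the corestrictions of $\cB$ to the two sides of the origin factor) reduce to the $\Z$ through which the corresponding corestrictions of $\cB_0$ factor. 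Both sides of the claimed equality are thus assembled from the same local data via the same gluing recipe, which is the only real obstacle and is essentially a bookkeeping exercise once the local identification is in hand.
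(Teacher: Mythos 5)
Your proposal is correct and follows exactly the route the paper intends: the paper states the corollary without proof, as an immediate globalization of Proposition \ref{reduction of beilinson quiver}, and your argument simply spells out the stratum-local identification via the product decomposition $\cB^{\boxtimes n} \boxtimes \Z[s^{\pm 1}]^{\boxtimes(\dim M - n)}$ together with compatibility with corestrictions, which is precisely the bookkeeping the paper leaves implicit.
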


\section{homological mirror symmetry for multiplicative hypertoric varieties} \label{section mht}

In this section we recall some basic facts about multiplicative hypertoric spaces and their Liouville geometry, along with the mirror symmetry results of \cite{gammage-mcbreen-webster}.

\subsection{Combinatorial data}
Given a complex torus $\A$, we write $A$ for the maximal compact subtorus. 

Consider a short exact sequence of complex tori
\begin{equation} \label{eq:basictoriseq}
	1\longrightarrow \gauge \longrightarrow (\C^*)^n \longrightarrow \flavor \longrightarrow 1
\end{equation}
along with the dual sequence 
\begin{equation} \label{eq:dualbasictoriseq} 1\longrightarrow \flavor^{\vee}\longrightarrow (\C^*)^n \longrightarrow \gauge^{\vee}\longrightarrow 1.\end{equation}
The fibers of the map $(\C^*)^n \to \gauge^{\vee}$ are a family of torsors over $\flavor^{\vee}$.  We assume throughout that no coordinate subtorus lies in the image of $\gauge$.

\begin{definition}
Given $\parone \in \gauge^{\vee}$, let $\flavorparone^{\vee} \subset (\C^*)^n$ be the preimage of $\parone$. We define a toric hyperplane arrangement $\{H_i\} \subset \flavorparone^{\vee}$, given by the intersections with the coordinate subtori of  $\Dtordual$. Define $\flavor^{\vee, \circ}_\parone \subset \flavorparone^{\vee}$ be the complement of this arrangement.
\end{definition} 

Now suppose moreover that $\beta$ lies in the compact subtorus $\cgauge^{\vee}$. The intersection $\cflavorparone^{\vee} := \flavorparone^{\vee} \cap \Dtordualcpt$ is naturally an integral affine manifold with integral structure induced from $\Dtordualcpt = \R^n/\Z^n$. Intersecting with the $H_i$ defines an integral normal crossings arrangement \begin{equation} \label{hypertoric hyperplane arrangement}
    \cH \subset \cflavorparone^{\vee}.
\end{equation} 

The combinatorial type of this arrangement is locally constant as $\beta$ varies in $\cgauge^{\vee}$, away from a finite set of codimension-one subtori.

\subsection{Construction}

Consider  
$$\TAo := \{ (\sz, \sw ) | \sz \sw \neq -1 \} \subset \C^2$$ 
with a holomorphic symplectic form 
$$\OmegaBet := \frac{d \sz d \sw}{1 + \sz \sw}.$$

The map 
\begin{eqnarray*}
	\mu_{\C^*} :  \TAo & \to & \C^* \\
	(\sz, \sw) & \mapsto & 1 + \sz \sw
\end{eqnarray*}
is the multiplicative moment map for the (complex) quasi-Hamiltonian $\C^*$-action scaling $\sz$ (resp. $\sw$) with weight $1$ (resp. $-1$). Concretely, this means that $d\log(\mu_{\C^*})$ is the contraction of $\OmegaBet$ by the vector field associated to the $\C^*$ action.

The torus $\Dtor$ acts factorwise on the symplectic variety $\TAno$, with moment map $$\mu_{\C^*}^n :  \TAno \to (\C^*)^n =  \Dtordual.$$ By restricting along the inclusion $\gauge \hookrightarrow \Dtor,$ we also have an action of $\gauge$ and  a moment map 
$$ \mu^\gauge : \TAno \to \gauge^\vee. $$ This moment map is obtained by composing the moment map for $\Dtor$ with the the map $ \Dtordual \to \gauge^{\vee}$ induced by the inclusion $\gauge \to \Dtor.$ Now let $\parone\in \gauge^\vee, \GIT \in \fg^{\vee}_\Z.$

\begin{definition}\label{def:mht}
	The Betti hypertoric variety $\mht_{\parone,\GIT}$ associated to the data of the short exact sequence \eqref{eq:basictoriseq} and $(\parone,\GIT)$ 
	is the GIT quotient $(\mu^{\gauge})^{-1}(\parone)/\!\!/_{\GIT} \gauge$. 
\end{definition}

For $\parone$ away from a finite number of complex subtori of $\gauge^{\vee}$, $\mht_{\parone,0}$ is a smooth affine variety of complex dimension $2 \rk \flavor$. Fix $\parone$ generic in this sense, but lying in $\cgauge^{\vee} \subset \gauge^{\vee}$. 

The action of $(\C^*)^n$ on $\TAno$ descends to an action on $\mht_{\parone, \GIT}$ factoring through the torus $\flavor = \Dtor/\gauge$. The latter has a multiplicative moment map 
\begin{equation} \label{multiplicative moment map}
\mu^\flavor : \mht_{\parone, \GIT} \longrightarrow \Dtordual 
\end{equation}
which is just the restriction of the corresponding map for $\Dtor$. 

\begin{proposition} \label{prop:complexflavormoment}
	$\mu^\flavor : \mht_{\parone, 0} \to \Dtordual$ has image $\flavorparone^{\vee}$ and regular locus $\flavor^{\vee, \circ}_\parone$.
\end{proposition}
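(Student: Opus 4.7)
The plan is to reduce the proposition to the one-factor calculation on $\TAo$ and then propagate through the product structure of $\mu_{\C^*}^n$ on $\TAno$ together with the $\gauge$-quotient.

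For the image, observe that by construction $\mu^\gauge$ is the composition of $\mu_{\C^*}^n$ with the projection $\Dtordual \to \gauge^\vee$ of \eqref{eq:dualbasictoriseq}. Thus $(\mu^\gauge)^{-1}(\parone) \subset \TAno$ is mapped by $\mu_{\C^*}^n$ into $\flavorparone^{\vee}$, the preimage of $\parone$ in $\Dtordual$, and since $\mu_{\C^*}^n$ is $(\C^*)^n$-invariant this descends to $\mu^\flavor$. Surjectivity onto $\flavorparone^{\vee}$ is proved by an explicit lift: for each $(a_1,\ldots,a_n) \in \flavorparone^{\vee}$, set $(\sz_i, \sw_i) = (1, a_i - 1)$, which lies in $\TAo$ since $a_i \in \C^*$, and which defines a point of $(\mu^\gauge)^{-1}(\parone)$ descending to the required preimage in $\mht_{\parone, 0}$.

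For the regular locus, the local calculation $d\mu_{\C^*} = \sw\,d\sz + \sz\,d\sw$ shows that $\mu_{\C^*}$ has a unique critical point at the origin with critical value $1$. By the product structure, the critical values of $\mu_{\C^*}^n$ form the union of coordinate subtori of $\Dtordual$, and intersecting with $\flavorparone^{\vee}$ recovers the arrangement $\{H_i\}$ of \eqref{hypertoric hyperplane arrangement}. Over any $a \in \flavor^{\vee, \circ}_\parone$, the fiber in $\TAno$ is a single $(\C^*)^n$-orbit on which $\gauge$ acts freely, by the standing hypothesis that no coordinate subtorus lies in $\gauge$; hence the quotient is a smooth $\flavor$-torsor and $\mu^\flavor$ is a submersion there.

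The main technical point, and the step I would prepare most carefully, is the converse: showing that every $a$ lying on some $H_i$ is actually a critical value of $\mu^\flavor$ after taking the quotient, not merely upstairs. For this one constructs a critical preimage with $(\sz_i, \sw_i) = (0, 0)$ in the $i$-th factor and the remaining coordinates chosen generically, then argues, using the genericity assumption on $\parone$, that this point is $\gauge$-stable and that the differential obstruction to submersivity persists in the quotient because $\gauge$ does not contain the coordinate $\C^*$ scaling the $i$-th factor. This is the only place where the specifics of the GIT quotient are invoked rather than just the symplectic geometry on $\TAno$.
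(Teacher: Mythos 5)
The paper does not actually prove this proposition: Section \ref{section mht} states it as part of the recollection of standard facts about multiplicative hypertoric varieties from \cite{gammage-mcbreen-webster}, so there is no in-paper argument to compare yours against; I am assessing your plan on its own terms. Your first two paragraphs are essentially correct: the containment of the image in $\flavorparone^{\vee}$, the explicit lift $(\sz_i,\sw_i)=(1,a_i-1)$, and the identification of the upstairs critical values with the union of the subtori $\{a_i=1\}$ all work. One misattribution: freeness of $\gauge$ on a fiber that is a single $(\C^*)^n$-orbit is automatic (a subgroup acts freely on any torsor of the ambient torus), so the hypothesis that no coordinate subtorus lies in $\gauge$ is not what that step uses; its real role is in your last paragraph. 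Also, submersivity over $\flavor^{\vee,\circ}_\parone$ is cleanest deduced by descending submersivity of the restricted map on $(\mu^{\gauge})^{-1}(\parone)$, rather than inferred from the fibers being smooth torsors.

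The genuine soft spot is the converse direction, which you flag but leave as a gesture, and as phrased it has two gaps. First, the differential computation must be done on the level set, not on $\TAno$: at a point with $(\sz_i,\sw_i)=(0,0)$, the image of the differential of $(\mu^{\gauge})^{-1}(\parone)\to\flavorparone^{\vee}$ is the intersection of the Lie algebra of $\flavor^{\vee}$ with $\langle e_j : j\neq i\rangle$ inside the Lie algebra of $\Dtordual$, and this is a proper subspace precisely because $e_i\notin\liegauge$; so your stated reason ("$\gauge$ does not contain the coordinate $\C^*$") is the right one, but note that if it failed, the restricted map could be submersive at your point even though $\mu_{\C^*}^n$ is not --- the obstruction is not a formal persistence statement. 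Second, descent to the GIT quotient: if $a$ lies on several hyperplanes and you put the origin only in the $i$-th factor with the other coordinates ``generic'' on the branches, the $\gauge$-orbit of your point need not be closed, and then its image in $\mht_{\parone,0}$ is represented by a different closed orbit where your pointwise computation says nothing. You should take $(\sz_j,\sw_j)=(0,0)$ in every factor with $a_j=1$ and $(1,a_j-1)$ elsewhere; then the orbit is closed with finite stabilizer (this is where genericity of $\parone$ enters), and you still need the quotient map to be surjective on tangent spaces there (a nontrivial finite stabilizer can break this, cf. $\C\to\C/\{\pm 1\}$), e.g. by checking the stabilizer is trivial, or by replacing the pointwise argument with the observation that the fiber over $a$ is a quotient of a product of nodal curves and tori and hence singular, which is incompatible with $a$ being a regular value for a map from the smooth variety $\mht_{\parone,0}$.
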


\subsection{Liouville structure}
One can define an open retract $\mht^{<1}_{\parone, 0} \subset \mht_{\parone, 0}$ carrying a (non-complete) hyperk\"ahler metric $g$. We denote the triplet of complex structures by $I,J,K$, the k\"ahler forms by $\omega_I, \omega_J, \omega_K$ and set $\Omega_I := \omega_J + I \omega_K$.
The symplectic manifold $(\mht^{<1}_{\parone, 0}, \omega_J)$ is made into a Liouville domain in \cite{gammage-mcbreen-webster}, with Liouville completion given by $\mht_{\parone, 0}$.\footnote{
The Liouville structure constructed in \cite{gammage-mcbreen-webster} is expected, but has not been shown, to be deformation equivalent to the one arising from the Stein structure.} 

We write $\mhtcover_{\parone,0}$ and $\mhtcover^{<1}_{\parone,0}$ for the universal covering spaces. The action of the compact subtorus $\cflavor \subset \flavor$ on the universal cover is hyperhamiltonian with moment map $\widetilde{\hypermoment} : \mhtcover^{<1}_{\parone,0} \to \liegauge^{\vee}_\C \oplus \liegauge^{\vee}_{\R, \parone}$. Here $\liegauge^{\vee}_{\R, \parone}$ is the preimage of $\beta$ under the projection $\R^n \to \frak{g}^{\vee}_\R$. 

The lattice $\liegauge^{\vee}_\Z \cong \pi_1(\mht_{\parone, 0})$ acts by deck transformations. The moment map is equivariant for the action of $\liegauge^{\vee}_\Z$ on $\liegauge^{\vee}_\C \oplus \liegauge^{\vee}_\R$ by translation of the second summand. Taking the quotient, we get a map
$\hypermoment : \mht^{<1}_{\parone, 0} \to \liegauge^{\vee}_\C \times \cflavorparone^{\vee}$. 

The moment map for the action of $\cflavor$ on $(\mht_{\parone, 0}, \omega_J)$ extends the composition $\hypermoment_\R : \mht_{\parone, 0}^{<1} \to \liegauge^{\vee}_\C \times \cflavorparone^{\vee} \to \liegauge_\R^{\vee}$ obtained by projection onto the real part of the first factor, and we also denote it $\hypermoment_\R$. Recall that $\mht_{\parone, 0} \sslash_{reg} \cflavor$ is defined as the quotient of the smooth part of $(\hypermoment_\R)^{-1}(0)$ by $\cflavor$.

 \begin{proposition}
     $\mu^\flavor : \mht_{\parone, 0} \to \Dtordual$ induces an isomorphism of $\mht_{\parone, 0} \sslash_{\reg} \cflavor $ with $\flavor^{\vee, \circ}_\parone$.
\end{proposition}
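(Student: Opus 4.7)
The plan is to verify that $\mu^{\flavor}$, viewed as a $\cflavor$-invariant map on $\hypermoment_\R^{-1}(0)^{sm}$, takes values in $\flavor^{\vee, \circ}_\parone$ and descends to a bijection there; smoothness and identification of dimensions then upgrade this to an isomorphism. The key tool is a Kempf--Ness style comparison of the symplectic reduction by $\cflavor$ with the algebraic quotient by the complex flavor torus $\flavor$.

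First I would note that $\mu^{\flavor}$ is $\flavor$-invariant (not merely equivariant), since the $\flavor$-action is a quasi-Hamiltonian abelian action. This is visible in the building block $\TAo$: the $\C^*$-action scaling $(\sz,\sw)$ with weights $(1,-1)$ preserves $\sz\sw$, hence $\mu_{\C^*}=1+\sz\sw$. Thus $\mu^{\flavor}$ is a fortiori $\cflavor$-invariant and descends to $\mht_{\parone,0}\sslash_{\reg}\cflavor$.

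Next I would show that for $x\in\hypermoment_\R^{-1}(0)^{sm}$, $\mu^{\flavor}(x)$ lies in $\flavor^{\vee,\circ}_\parone$. A point $x$ of $\hypermoment_\R^{-1}(0)$ is smooth if and only if $d\hypermoment_\R|_x$ is surjective, i.e.\ the Lie-algebra stabilizer of $\cflavor$ at $x$ is trivial. Since $\cflavor$ and $\flavor$ have the same Lie algebra, this is also equivalent to $\mu^{\flavor}$ being a submersion at $x$, which by Proposition \ref{prop:complexflavormoment} happens exactly when $\mu^{\flavor}(x)\in\flavor^{\vee,\circ}_\parone$. So restriction gives a well-defined $\cflavor$-equivariant map $\hypermoment_\R^{-1}(0)^{sm}\to\flavor^{\vee,\circ}_\parone$.

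Bijectivity and smoothness follow from Kempf--Ness. Over $\flavor^{\vee,\circ}_\parone$, the preimage $(\mu^{\flavor})^{-1}(\flavor^{\vee,\circ}_\parone)$ carries a free $\flavor$-action (no coordinate subtorus lies in $\gauge$), so it is a principal $\flavor$-bundle. A Kempf--Ness argument applied to the $\flavor$-action on this bundle, using the K\"ahler structure underlying $\omega_J$ supplied by the hyperk\"ahler data on $\mht^{<1}_{\parone,0}$ and extending across the Liouville completion, shows that each $\flavor$-orbit meets the zero level of $\hypermoment_\R$ in exactly one $\cflavor$-orbit. Combining this with the principal bundle structure yields an isomorphism $\hypermoment_\R^{-1}(0)^{sm}/\cflavor \;\xrightarrow{\sim}\; (\mu^{\flavor})^{-1}(\flavor^{\vee,\circ}_\parone)/\flavor = \flavor^{\vee,\circ}_\parone$, exactly the content of the proposition.

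The main obstacle is the Kempf--Ness step: verifying that every $\flavor$-orbit in the regular locus meets $\hypermoment_\R^{-1}(0)$ in a single $\cflavor$-orbit. On $\TAo$ this is a one-line calculation---the orbit of $(\sz,\sw)$ meets $\{|\sz|^2=|\sw|^2\}$ in the circle determined by the single invariant $|\sz\sw|$---and the general case should reduce to this model via the toric structure and the fact that $\cflavor$ arises as the quotient of the compact torus $\Dtordualcpt$ by $\cgauge$. Some care is needed because $\mht_{\parone,0}$ is the Liouville completion rather than a genuine K\"ahler manifold; one must either invoke the hyperk\"ahler description on the retract $\mht^{<1}_{\parone,0}$ and extend by a deformation argument, or work directly with the algebraic quotient description and show that each orbit's intersection with $\{\hypermoment_\R=0\}$ is a torsor under the compact stabilizer.
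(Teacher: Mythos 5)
Your outline follows the paper's route for most of its length: both arguments rest on Proposition~\ref{prop:complexflavormoment}, both reduce to showing that the smooth part of $\hypermoment_\R^{-1}(0)$ is exactly the part lying over $\flavor^{\vee,\circ}_\parone$, and both then need that each regular fiber of $\mu^\flavor$ meets the zero level in precisely one free $\cflavor$-orbit. The divergence is in how that last step is handled. The paper never invokes Kempf--Ness: it records that the fibers of $\mu^\flavor$ are explicit products of copies of $\C^*$ and $\C\cup_0\C$ on which $\flavor$ acts with finitely many orbits, so that $\cflavor$ acts freely off the singular loci of the complex fibers; with this description in hand, the identification of the smooth locus and the ``one $\cflavor$-orbit per regular fiber'' statement both reduce to the rank-one model you yourself mention, where the orbit $\{\sz\sw=c\}$, $c\neq 0$, meets $\{|\sz|=|\sw|\}$ in a single circle. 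Your version instead routes the bijectivity through a global Kempf--Ness argument, and the obstacle you flag --- that $\mht_{\parone,0}$ is a Liouville completion rather than a K\"ahler manifold, so the convexity/properness input is not available off the shelf --- is real; as written, that step (which is the actual content of the proposition) is left as ``either a deformation argument or a direct check,'' i.e.\ it is not carried out. The fix is exactly the paper's fiber description, which localizes the problem to the model $\TAo$ and sidesteps any global K\"ahler structure.

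A second, smaller point: your criterion ``$x$ is a smooth point of $\hypermoment_\R^{-1}(0)$ if and only if $d\hypermoment_\R|_x$ is surjective'' is not a general fact (a fiber can be smooth at a non-submersive point), so the inclusion of the smooth part into the preimage of $\flavor^{\vee,\circ}_\parone$ needs a separate argument. Again the local model supplies it: at a point of the zero level sitting at a node of a complex fiber, the zero level looks locally like a product with $\{|\sz|=|\sw|\}\subset\C^2$, a cone on a $2$-torus, which is genuinely singular at the cone point; so such points are indeed excluded from $\mht_{\parone,0}\sslash_{\reg}\cflavor$. In short, the strategy is sound and parallel to the paper's, but the two steps you treat as formal (smoothness criterion, Kempf--Ness matching of orbits) are precisely the ones the paper settles by the explicit description of the fibers of $\mu^\flavor$, and your write-up should be completed by that local computation rather than by an appeal to general K\"ahler quotient theory.
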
 
\begin{proof}
The fibers of $\mu^{\flavor} : \mht_{\parone, 0} \to \flavorparone^{\vee}$ are products of $\C^*$ and $\C \cup_0 \C$. The torus $\flavor$ acts on any such fiber with a finite number of orbits; the action of the subtorus $\cflavor$ is thus free away from the singular locus. Thus the singularities of the $\cflavor$-moment fibers are located along the singular locus of the complex moment fibers. The claim then follows from Proposition \ref{prop:complexflavormoment}.
\end{proof}
We write $\core \subset \mht_{\parone, 0}$ for the Liouville skeleton, and $\widetilde{\core}$ for its preimage in $\mhtcover_{\parone,0}$. In \cite{gammage-mcbreen-webster}, it is shown that $\core = (\hypermoment)^{-1}(0 \times \cflavorparone^{\vee})$. This allows a very explicit description of the skeleton, which we now recall.

Let $\chamberset$ index the chambers of the arrangement $\cH$ on $\cflavorparone^{\vee}$, and $\tchamberset$ index the chambers of the periodic arrangement $\widetilde{\cH}$ on $\liegauge_\R^{\vee}$. The group $\liegauge^{\vee}_\Z = \pi_1(\cflavor^{\vee})$ acts on $\tchamberset$ by translations with quotient $\chamberset$.

\begin{proposition} \label{prop:coregeom}
$\widetilde{\core}$ is a union of smooth $\cflavor$-stable Lagrangians $\frak{X}_{\tBx}$, indexed by $\tBx \in \tchamberset$. Each $\frak{X}_{\tBx}$ is a complex toric variety in the complex structure $I$, (complex) Lagrangian with respect to $\Omega_I$. The action of the deck transformation group $\pi_1(\mht_{\parone, 0}) \cong \pi_1(\cflavor^{\vee})$ on the component set is described by the natural action of $\liegauge^{\vee}_\Z$ on $\tchamberset$.
\end{proposition}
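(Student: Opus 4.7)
The plan is to use the identification $\widetilde{\core} = \widetilde{\hypermoment}^{-1}(0 \times \liegauge^{\vee}_{\R,\parone})$ recalled before the statement, together with the hyperk\"ahler structure, to read off the claimed decomposition directly from the chamber structure of $\widetilde{\cH}$.

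For each chamber $\tBx \in \tchamberset$, I would set $\frak{X}_{\tBx} := \widetilde{\hypermoment}^{-1}(0 \times \overline{\tBx})$, so that tautologically $\widetilde{\core} = \bigcup_{\tBx} \frak{X}_{\tBx}$. Over the open chamber the image avoids $\widetilde{\cH}$, so in complex structure $I$ the holomorphic moment map $\mu^{\flavor}$ has smooth $(\C^*)^{\rk \flavor}$-fibers and $\widetilde{\hypermoment}_\R$ restricts to a diffeomorphism of a single $\cflavor$-orbit onto $\tBx$; hence $\widetilde{\hypermoment}^{-1}(0 \times \tBx)$ is a trivial $\cflavor$-bundle over $\tBx$. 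Over the walls, the fibers of $\mu^{\flavor}$ degenerate -- each hyperplane crossing collapses a $\C^*$ factor to a nodal $\C \cup_0 \C$ -- and $\frak{X}_{\tBx}$ correspondingly acquires a toric boundary divisor. This is visible from the $\cflavor$-equivariant local product structure at a vertex of the arrangement, where a neighborhood in $\mhtcover^{<1}_{\parone,0}$ is isomorphic to a product of copies of $\TAo$ times a smooth factor, each $\TAo$-factor carrying skeleton $\{\sz=0\} \cup \{\sw=0\}$ over $|\sz|=|\sw|$.

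The integrality hypothesis on $\widetilde{\cH}$ (wall normals form part of an integer basis of $L$) makes each closed chamber a Delzant polytope, so $\frak{X}_{\tBx}$ is smooth and is identified with the toric variety whose moment polytope for the $\cflavor$-action is $\overline{\tBx}$. Complex Lagrangianness with respect to $\Omega_I = \omega_J + i \omega_K$ follows because $\widetilde{\core}$ is $\omega_J$-Lagrangian by construction of the Liouville structure in \cite{gammage-mcbreen-webster}, while the vanishing of $\omega_K$ along $\widetilde{\core}$ is equivalent to the defining condition $\mathrm{Im}\,\widetilde{\hypermoment}_\C = 0$. For the deck transformation claim, $\liegauge^{\vee}_\Z \cong \pi_1(\mht_{\parone,0})$ acts on $\liegauge^{\vee}_{\R,\parone}$ by lattice translations preserving $\widetilde{\cH}$, and $\widetilde{\hypermoment}$ is equivariant, so the induced permutation of the $\frak{X}_{\tBx}$ matches the $\liegauge^{\vee}_\Z$-action on $\tchamberset$.

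The main obstacle is the local calculation at arrangement intersections: verifying the $\cflavor$-equivariant product reduction to $\TAo$-factors and matching the $\mu^{\flavor}$-fiber degeneration with the expected toric divisor structure on $\frak{X}_{\tBx}$. This is tractable given the simple normal crossings hypothesis on $\widetilde{\cH}$ and the explicitly understood model $\TAo$, but requires careful bookkeeping of the integrality/Delzant data to confirm smoothness across high-codimension strata.
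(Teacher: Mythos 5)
The paper does not actually prove this statement: it is recalled from \cite{gammage-mcbreen-webster} as part of the known description of the skeleton, so your attempt can only be judged against what a complete argument would need. Your global structure is the natural one and surely close to the original: decompose $\widetilde{\core}=\widetilde{\hypermoment}^{-1}(0\times\liegauge^{\vee}_{\R,\parone})$ into preimages of closed chambers of $\widetilde{\cH}$, identify each piece with a toric variety whose polytope is the chamber, get smoothness from the unimodularity/Delzant condition, and deduce the deck-transformation statement from equivariance of $\widetilde{\hypermoment}$. The last point and the set-theoretic decomposition are fine.

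Two steps are genuinely gapped or wrong, however. First, your justification of complex Lagrangianity is incorrect as stated: vanishing of $\mathrm{Im}\,\widetilde{\hypermoment}_\C$ along a subset is not ``equivalent'' to vanishing of $\omega_K$ on it --- the zero level of a single moment-map component has codimension $\rk\flavor$ and is generically coisotropic, not $\omega_K$-isotropic. The correct mechanism is that $\widetilde{\hypermoment}_\C=\mu_J+i\mu_K$ is $I$-holomorphic, so its zero fibre is an $I$-complex subvariety; a half-dimensional component of it is then $\Omega_I$-Lagrangian, e.g.\ because $\omega_J$-isotropy (which you do get from the skeleton property) together with the fact that $\Omega_I$ has type $(2,0)$ for $I$ forces $\omega_K$ to vanish as well. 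Note that your argument never actually establishes that $\frak{X}_{\tBx}$ is $I$-complex, which is the hinge of the whole statement. Second, the identification of $\frak{X}_{\tBx}$ as a toric variety is argued through the degeneration pattern of $\mu^{\flavor}$-fibres and a claimed local product decomposition into $\TAo$-factors. But $\mu^{\flavor}$ is holomorphic for the Betti (algebraic) complex structure, in which the core is totally real (it fibres over the compact torus $\cflavorparone^{\vee}$); it is not $I$-holomorphic, so the $\mu^{\flavor}$-fibre picture does not directly produce the $I$-toric structure without invoking the hyperk\"ahler rotation, i.e.\ an actual hyperk\"ahler (or at least $\cflavor$-equivariant complex-symplectic/Weinstein) local model near each stratum. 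The hyperk\"ahler metric on $\mhtcover^{<1}_{\parone,0}$ is not literally a product of model $\TAo$'s, so ``isomorphic to a product of copies of $\TAo$'' is itself a nontrivial claim requiring proof --- you flag exactly this as the ``main obstacle,'' so as written the proposal is a reasonable plan whose central step remains open. (Minor: the core of $\TAo$ is $\{|\sz|=|\sw|\}\cap\{|1+\sz\sw|=1\}$, not ``$\{\sz=0\}\cup\{\sw=0\}$ over $|\sz|=|\sw|$,'' and over a point of an open chamber the core fibre is a single free $\cflavor$-orbit, so the chamber preimage is a $\cflavor$-bundle over the chamber rather than anything mapping diffeomorphically to it.)
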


We denote the image of $\frak{X}_{\tBx}$ in $\mht_{\parone, 0}$ by $\frak{X}_{\Bx}$. In general, this is an immersed toric submanifold.

\begin{proposition}
Each of the lagrangians $\frak{X}_{\tBx}$ is contained in a Weinstein neighborhood $\nhd_{\tBx}$, such that $\widetilde{\core} \cap \nhd_{\tBx}$ is identified the union of conormals to the toric strata of $\frak{X}_{\tBx}$.
\end{proposition}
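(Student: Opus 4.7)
The plan is to construct $\nhd_{\tBx}$ via the $\cflavor$-equivariant Weinstein neighborhood theorem, and then identify $\widetilde{\core} \cap \nhd_{\tBx}$ by reducing to the local product model $(\TAo)^k$ transverse to $\frak{X}_{\tBx}$.

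First, I would invoke the equivariant Weinstein neighborhood theorem for the smooth $\cflavor$-stable Lagrangian $\frak{X}_{\tBx} \subset (\mhtcover_{\parone, 0}, \omega_J)$, where smoothness and $\cflavor$-stability are provided by the preceding Proposition. This gives a $\cflavor$-equivariant symplectomorphism of an open neighborhood $\nhd_{\tBx}$ with a neighborhood of the zero section in $T^*\frak{X}_{\tBx}$ equipped with its canonical Liouville form; this is the Weinstein structure referred to in the statement, and under the identification $\frak{X}_{\tBx}$ maps to the zero section.

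Second, I would identify $\widetilde{\core} \cap \nhd_{\tBx}$ in local slices. Fix $p \in \frak{X}_{\tBx}$ whose image under $\widetilde{\hypermoment}$ lies on a codimension-$k$ face of $\tBx$, i.e., on the intersection of $k$ bounding hyperplanes $H_{i_1}, \dots, H_{i_k}$. A $\cflavor$-equivariant \'etale slice at $p$ identifies a neighborhood in $\mhtcover_{\parone, 0}$ with $(\TAo)^k \times (\C^*)^{2(\rk \flavor - k)}$, in such a way that $\frak{X}_{\tBx}$ corresponds to $\{z_1 = \cdots = z_k = 0\} \times U(1)^{\rk \flavor - k}$ (the choice of axis $\{z_j = 0\}$ versus $\{w_j = 0\}$ being dictated by the co-orientation of $H_{i_j}$ from the chamber $\tBx$). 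By the description $\widetilde{\core} = \widetilde{\hypermoment}^{-1}(0 \times \tchamberset)$, the skeleton pulls back to $\prod_{j=1}^k \{z_j w_j = 0\} \times U(1)^{\rk \flavor - k}$. Under the Weinstein identification with $T^*\frak{X}_{\tBx}$, each nodal factor $\{z_j = 0\} \cup \{w_j = 0\}$ in the transverse direction becomes the union of the local zero-section direction with the local cotangent-fiber direction; expanding the product, this is precisely the union of conormals to the toric strata of $\frak{X}_{\tBx}$ passing through $p$.

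The main technical obstacle is the coherent patching of these local identifications along $\frak{X}_{\tBx}$. This is controlled by the $\cflavor$-equivariance of both the Weinstein model and the moment map description of $\widetilde{\core}$: transition functions between adjacent product charts respect the toric stratification of $\frak{X}_{\tBx}$ and the associated local conormal decomposition, since the data is ultimately determined by the hyperplane arrangement $\widetilde{\cH} \subset \liegauge_\R^{\vee}$. Once the equivariant local structure is established, the global identification follows by a stratum-by-stratum combinatorial check indexed by faces of $\tBx$.
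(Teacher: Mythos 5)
The paper itself contains no proof of this statement: Section \ref{section mht} simply recalls it, together with the surrounding propositions, from \cite{gammage-mcbreen-webster}, so there is no internal argument to compare yours against. Judged on its own, your sketch follows the strategy one would expect (local product models built from the basic building block $\TAo$ near each toric stratum, plus $\cflavor$-equivariance), but two points are genuine gaps rather than routine details. First, the local model is mis-stated: in a chart $(\TAo)^k \times (\C^*)^{2(\rk \flavor - k)}$ the Lagrangian $\frak{X}_{\tBx}$ has real dimension $2\rk\flavor$, so its trace in the free factor must be a full Lagrangian of real dimension $2(\rk\flavor-k)$ (a family of $U(1)^{\rk\flavor-k}$-orbits over the chamber directions), not $U(1)^{\rk\flavor-k}$; the same under-count appears in your description of the pulled-back skeleton $\prod_j \{z_j w_j = 0\} \times U(1)^{\rk\flavor-k}$, which is short by $\rk\flavor-k$ real dimensions.

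Second, and more seriously, the equivariant Weinstein neighborhood theorem invoked in your first step only normalizes $\frak{X}_{\tBx}$ itself; it gives no control over where the neighboring core components (the $\{w_j=0\}$-type branches of $\widetilde{\core}$) land under that abstractly produced chart. The sentence asserting that ``under the Weinstein identification each nodal factor becomes the union of the local zero-section direction with the local cotangent-fiber direction'' is precisely the content of the proposition, and it cannot be read off from an arbitrary Weinstein chart: one needs either a neighborhood theorem for the whole configuration of core components intersecting $\frak{X}_{\tBx}$ cleanly along toric strata, or a direct construction of the chart out of the local product models together with an honest globalization (a Moser-type interpolation of the locally defined identifications along $\frak{X}_{\tBx}$). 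Your final paragraph replaces this with an appeal to $\cflavor$-equivariance and to the arrangement ``determining the data,'' but equivariance alone does not force two equivariant symplectic identifications to agree, so the passage from the local slices to a single Weinstein neighborhood carrying $\widetilde{\core}\cap\nhd_{\tBx}$ onto the union of conormals is exactly the step left unproved.
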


In \cite{gammage-mcbreen-webster}, Maslov data is given for $\mht_{\parone, 0}$ pulling back to the canonical cotangent Maslov data on each $\nhd_{\tBx}$. For this choice, we have the following. 

\begin{proposition} \label{prop:ushonweinsteinnhd} \cite{gammage-mcbreen-webster}
$\mu sh_{\core}$ is equivalent, after pullback to $\nhd_{\tBx}$, to constructible sheaves on the toric variety $\frak{X}_{\tBx}$ smooth along the toric stratification. 
\end{proposition}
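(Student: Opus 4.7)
The plan is to reduce the statement to the standard Kashiwara--Schapira identification of microsheaves along a union of conormals with constructible sheaves smooth along the corresponding stratification, transported from $T^*\frak{X}_{\tBx}$ into $\nhd_{\tBx}$ via a Weinstein neighborhood identification.

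First, I would apply the Weinstein neighborhood theorem to the exact Lagrangian $\frak{X}_{\tBx} \subset \mht_{\parone,0}$, producing a symplectomorphism from $\nhd_{\tBx}$ to an open neighborhood of the zero section in $T^*\frak{X}_{\tBx}$. By the preceding proposition, under this symplectomorphism the skeleton $\widetilde{\core} \cap \nhd_{\tBx}$ is carried onto the conic Lagrangian $\Lambda \subset T^*\frak{X}_{\tBx}$ formed by the union of conormals to the toric strata. Since $\mu sh$ is a Weinstein-local invariant, this reduces the problem to computing $\mu sh_\Lambda$ inside $T^*\frak{X}_{\tBx}$.

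Next I would check compatibility of Maslov data: the choice made in \cite{gammage-mcbreen-webster} is pinned down precisely so that it pulls back to the canonical cotangent Maslov data on $\nhd_{\tBx}$, which ensures that the equivalence induced above is strict, introducing no grading shift or local system twist. Given this, the final step is to invoke the by-now standard theorem (Kashiwara--Schapira, in the form packaged by \cite{GPS3} for Weinstein microsheaves): for a Whitney stratification $\cS$ of a smooth manifold $X$ with associated union of conormals $\Lambda_\cS \subset T^*X$, global sections of $\mu sh_{\Lambda_\cS}$ recover the dg-category of $\cS$-constructible sheaves on $X$. Applying this with $X = \frak{X}_{\tBx}$ and $\cS$ the toric stratification yields the claimed equivalence.

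The main obstacle is in fact the bookkeeping in the first two steps: one must verify, compatibly with the $\cflavor$-symmetry of the situation, both that the Weinstein identification sends the concrete skeleton model onto the nominal union of conormals and that the Maslov data pulls back as asserted. Neither is automatic, but both are local computations carried out at each toric stratum in \cite{gammage-mcbreen-webster}; granted these, the present proposition follows from the general microsheaf machinery.
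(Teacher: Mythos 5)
Your proposal is correct and matches the argument the paper relies on: the proposition is quoted from \cite{gammage-mcbreen-webster} without an independent proof here, and the intended derivation is exactly your chain — the Weinstein neighborhood of $\frak{X}_{\tBx}$ identifying $\widetilde{\core} \cap \nhd_{\tBx}$ with the union of conormals to the toric strata, the compatibility of the chosen Maslov data with the canonical cotangent Maslov data, and then the Kashiwara--Schapira identification (in the microsheaf packaging used via \cite{GPS3}) of $\mu sh$ along a union of conormals with sheaves constructible for the corresponding stratification. The local verifications you flag (skeleton-to-conormal matching and Maslov data pullback) are precisely what \cite{gammage-mcbreen-webster} supplies, so no further input is needed.
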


We now consider the compact-torus valued factor $\mu^{\cflavor}_{U(1)} : \core \to \cflavorparone^{\vee}$ of the moment map, restricted to the core. The pushforward $(\mu^{\cflavor}_{U(1)})_* \mu sh_{\core}$ is a cosheaf of categories locally constant with respect to the natural stratification induced by the integral crossings arrangement $\cH$.

Consider a chamber $\Delta_{\Bx}$ of this arrangement, corresponding to a smooth irreducible component $\frak{X}_{\Bx} \subset \core$. The preimage under $\mu^{\cflavor}_{U(1)}$ of a small neighborhood $V_{\Bx} \supset \Delta_{\Bx}$ coincides with $\nhd_{\tBx} \cap \core$, possibly after shrinking the latter. 

The abelian category of perverse sheaves on $\frak{X}_{\Bx}$ smooth along the toric stratification is equivalent to representations of a certain quiver with relations, as can be deduced readily from \cite{Beilinson-glue}, and is described explicitly in Dupont's thesis \cite{Dupont-thesis}. Combining with \ref{prop:ushonweinsteinnhd}, we obtain a quiver description of $(\mu^{\cflavor}_{U(1)})_* \mu sh_{\core}$ restricted to $V_{\Bx}$. This globalizes as follows:
\begin{theorem} \label{GMW mush to quiver} \cite{gammage-mcbreen-webster}. 
There is an equivalence of sheaves of categories 
\begin{equation} (\mu^{\cflavor}_{U(1)})_* \mu sh_{\core}  \cong \cB(\cflavorparone^\vee, \cH)-mod. \end{equation}
In particular, via the sheaf-Fukaya comparison of \cite{GPS3}, 
there is an equivalence 
$$\Fuk(\mht_{\beta,0}) \cong  \Gamma(\cflavorparone^\vee, \cB(\cflavorparone^{\vee}, \cH)-mod).  $$
\end{theorem}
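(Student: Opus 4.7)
The plan is to prove the equivalence by a local-to-global argument: first establish it on a neighborhood of each chamber of $\cH$ using the Weinstein neighborhoods of Proposition \ref{prop:ushonweinsteinnhd}, then check that the local equivalences assemble into the desired equivalence of (co)sheaves of categories on $\cflavorparone^\vee$, and finally deduce the Fukaya-theoretic statement via \cite{GPS3}.

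For the local step, I would fix a chamber $\Delta_\Bx$ lying in the closure of a codimension-$k$ stratum where $k$ hyperplanes of $\cH$ meet, with corresponding toric Lagrangian component $\frak{X}_\Bx \subset \core$. By Proposition \ref{prop:ushonweinsteinnhd}, after shrinking, $\mu sh_{\core}$ pulled back to $\nhd_{\tBx}$ is equivalent to the category of constructible sheaves on $\frak{X}_\Bx$ smooth along the toric stratification. Applying Beilinson's description of such constructible sheaves \cite{Beilinson-glue}, worked out explicitly in \cite{Dupont-thesis}, yields a quiver-with-relations description which is precisely $\cB^{\boxtimes k} \boxtimes \Z[s,s^{-1}]^{\boxtimes \dim - k}$-mod, i.e., modules over the local stalk of $\cB(\cflavorparone^\vee, \cH)$. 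At this level of localization the match is essentially built into the construction of $\cB$, which was designed to reproduce precisely this local picture.

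For the globalization, I would verify that under the pushforward $(\mu^{\cflavor}_{U(1)})_*$, the generization maps of $\mu sh_{\core}$ between adjacent strata match the corestriction maps that define $\cB(\cflavorparone^\vee, \cH)$ as a cosheaf — in particular, that moving from a codimension-$k$ stratum to a stratum of one lower codimension corresponds to the inclusion $\Z[t,t^{-1}] \hookrightarrow \cB$ (or of $\Z[\tau,\tau^{-1}]$) along the appropriate node. This is the main obstacle: it requires carefully comparing the local Beilinson identifications at adjacent strata, which involves tracking how the Weinstein neighborhoods $\nhd_{\tBx}$ fit together along faces of the chamber decomposition, and checking compatibility with the Maslov data of \cite{gammage-mcbreen-webster}. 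The latter is designed to pull back to the canonical cotangent Maslov data on each $\nhd_{\tBx}$, which reduces the check to a statement about toric-constructible sheaves already present in Beilinson's construction. A secondary subtlety is the action of the deck group $\pi_1(\mht_{\parone, 0}) \cong \liegauge^{\vee}_\Z$, permuting components $\frak{X}_{\tBx}$ of $\widetilde{\core}$; I would handle this by first establishing the equivalence on the universal cover, indexing by $\tchamberset$, and then descending via the equivariance of $\hypermoment$ and Proposition \ref{prop:coregeom}.

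Once the local equivalences and gluing data match, the resulting equivalence of sheaves of categories on $\cflavorparone^\vee$ is the first assertion. The Fukaya-theoretic statement then follows by applying the sheaf-Fukaya comparison of \cite{GPS3}, which identifies $\Fuk(\mht_{\beta,0})$ with global sections of $\mu sh_{\core}$; since global sections commute with pushforward along $\mu^{\cflavor}_{U(1)}$, this yields
\[
\Fuk(\mht_{\beta,0}) \;\cong\; \Gamma\!\left(\mht_{\beta,0},\, \mu sh_{\core}\right) \;\cong\; \Gamma\!\left(\cflavorparone^\vee,\, \cB(\cflavorparone^{\vee}, \cH)\text{-mod}\right),
\]
as required.
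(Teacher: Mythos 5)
Your outline follows essentially the same route as the paper, which does not reprove this statement but imports it from \cite{gammage-mcbreen-webster}: the paragraph preceding the theorem gives exactly your local identification (Proposition \ref{prop:ushonweinsteinnhd} combined with the Beilinson/Dupont quiver description of toric-constructible sheaves) and then cites the globalization, with the Fukaya-theoretic statement obtained via \cite{GPS3}. Two minor remarks: global sections should be taken over the skeleton, i.e. $\Gamma(\core,\mu sh_{\core})$ (equivalently over $\cflavorparone^{\vee}$ after pushing forward along $\mu^{\cflavor}_{U(1)}$), not over $\mht_{\parone,0}$ itself, and the gluing/monodromy check you correctly single out as the main obstacle is precisely the content of the cited work rather than something carried out in this paper.
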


The above result is half of a mirror symmetry equivalence. The other half is as follows.

Consider the singular variety $\mht_{1, 0}$. For any sufficiently sufficiently generic $\theta \in \frak{g}^{\vee}_\Z$, we have a crepant resolution $\mht_{1, \theta} \to \mht_{1,0}$.

\begin{theorem} \label{GMW B-model} \cite{gammage-mcbreen-webster}. 
There is an equivalence of dg categories 
\[ \cB(\cflavorparone^{\vee}, \cH)-mod \cong \Coh(\mht_{1, \theta}).\]
This equivalence moreover intertwines the central actions of $\cO(\flavor^{\vee})$
arising from \eqref{central action on B} on the LHS and \eqref{multiplicative moment map} on the RHS.
\end{theorem}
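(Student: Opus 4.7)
The plan is to exhibit a tilting generator on $\mht_{1,\theta}$ whose endomorphism algebra, pushed forward to $\flavorparone^\vee$ along the multiplicative moment map, recovers $\cB(\cflavorparone^\vee, \cH)$. I first unpack the resolution $\mht_{1,\theta}$: the GIT parameter $\theta$ determines a preferred chamber structure on $\cH$, and produces a cover of $\mht_{1,\theta}$ by toric affine opens $U_\Bx$ indexed by chambers $\Bx \in \chamberset$; each $U_\Bx$ is toric of the form $(\C^*)^{\dim\flavor - k} \times \C^k$ (suitably open along divisors cut out by $1+z_iw_i = 0$), in coordinates adapted to the cone of $\Bx$. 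In these coordinates the moment map \eqref{multiplicative moment map} is a monomial projection to $\flavorparone^\vee$.

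Next I would construct a family of line bundles $L_\Bx$, one for each chamber, from fractional characters of $\gauge$ adapted to the position of $\Bx$, and set $\cT := \bigoplus_{\Bx \in \chamberset} L_\Bx$. The assertion to prove is that $\cT$ is a tilting generator of $\Coh(\mht_{1,\theta})$ whose endomorphism algebra agrees, as a $\Z[\flavor^\vee]$-algebra cosheaf over $\flavorparone^\vee$, with $\cB(\cflavorparone^\vee, \cH)$. Tilting-ness would be established by a \v{C}ech argument on the cover $\{U_\Bx\}$ reducing $\Ext$-vanishing to local toric calculations in each chart; generation is standard for such GIT resolutions once an ample subsum is identified.

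For the algebraic identification I would work locally near a maximal intersection of $n$ hyperplanes of $\cH$. There $\mht_{1,\theta}$ splits as a product of $n$ copies of the basic piece $\TAo = \{1+zw \neq 0\}$ times $(\C^*)^{\dim\flavor - n}$, and the Hom between line bundles for two chambers meeting across a single hyperplane produces generators $z, w$, while the invertible coordinate $1+zw$ on $\TAo$ becomes the loops $t, \tau$ under the identification $t = 1+yx,\ \tau = 1+xy$. This reproduces $\cB^{\boxtimes n} \boxtimes \Z[s,s^{-1}]^{\boxtimes \dim\flavor - n}$, i.e.\ the local model of $\cB(\cflavorparone^\vee, \cH)$. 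Globalizing across the atlas identifies $\End(\cT)$ with the global sections of the cosheaf, and thus gives the desired equivalence of dg categories by Morita.

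The hardest step will be the simultaneous verification of tilting-ness together with the precise identification of generators and relations between distant (not merely wall-adjacent) chambers, so that the local Beilinson-quiver structure assembles consistently with the cosheaf gluing of $\cB(\cflavorparone^\vee, \cH)$. Once the equivalence is in place, compatibility of central actions is essentially formal: Proposition \ref{center of beilinson quiver}, globalized as in \eqref{central action on B}, identifies the center of $\cB(\cflavorparone^\vee, \cH)$-mod with $\cO(\flavor^\vee)$, while on the coherent side the same ring acts by pullback along $\mu^\flavor$; the bundles $L_\Bx$ are constructed precisely so that these two actions coincide on the tilting generator, and hence on all of $\Coh(\mht_{1,\theta})$.
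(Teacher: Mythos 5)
This statement is not proved in the paper at all: Theorem \ref{GMW B-model} is imported as a black box from \cite{gammage-mcbreen-webster}, so there is no internal proof to measure your argument against. Measured against the source instead, your outline is essentially the strategy actually used there: a tilting generator on the resolution whose summands are line bundles indexed by the chambers of the arrangement, with the equivalence given by Morita theory once $\End(\cT)$ is matched with (global sections of) $\cB(\cflavorparone^{\vee},\cH)$, the local model near a deep stratum being a product of copies of the basic piece $\TAo$ where $z,w$ give the arrows and $1+zw$ gives the invertible loops $t,\tau$. So as a roadmap it is the right one.

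As a proof, however, the two steps you yourself flag as hardest are exactly the mathematical content, and they are asserted rather than carried out: (i) the $\Ext$-vanishing making $\cT$ tilting is not a routine \v{C}ech/toric exercise here, because the charts $U_{\Bx}$ are not affine-space toric charts but are cut open along $\{1+z_iw_i=0\}$, and because one must control Homs between summands attached to \emph{distant} chambers, where the relations (compositions of many wall-crossing arrows, weighted by powers of $1+z_iw_i$) are precisely what must be shown to match the cosheaf gluing of $\cB$; and (ii) your claim that the compatibility of central actions is ``essentially formal'' is not quite right as stated. Proposition \ref{center of beilinson quiver} identifies the center of $\cB$ abstractly with $\cO(\flavor^{\vee})$, but the theorem asserts that the \emph{specific} central action \eqref{central action on B} corresponds to pullback along the \emph{specific} map \eqref{multiplicative moment map}; verifying this requires checking, inside the endomorphism-algebra computation, that $(\mu^{\flavor})^{*}(1+z_iw_i)$ acts on each summand $L_{\Bx}$ by the element that Proposition \ref{center of beilinson quiver} calls $s_i$ (i.e.\ by $t$ on one side of a wall and $\tau$ on the other), which is part of the identification, not a consequence of it. A smaller but real mismatch: $\cB(\cflavorparone^{\vee},\cH)$ lives over the \emph{compact} torus $\cflavorparone^{\vee}=\flavorparone^{\vee}\cap U(1)^n$ attached to the A-side parameter $\parone$, while $\mu^{\flavor}$ on $\mht_{1,\theta}$ takes values in a complex torus; pushing $\End(\cT)$ forward along $\mu^{\flavor}$ does not literally produce a cosheaf on the base of $\cB$, so you need an extra retraction/comparison step (or a purely combinatorial matching of sections over chambers) to make the ``$\Z[\flavor^{\vee}]$-algebra cosheaf'' comparison meaningful.
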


\begin{remark}
The equivalence in \cite{gammage-mcbreen-webster} is most naturally formulated as 
$\Fuk(\mht_{\parone, 0}) \cong \Coh(\mht_{1, \parone})$,
where the RHS is defined as a certain non-commutative resolution of the singular variety $\mht_{1,0}$, depending on the parameter $\parone$. This NCR admits a canonical equivalence with $\Coh(\mht_{1, \theta})$. We will not take this perspective here.
\end{remark}

\section{Lagrangian skeleton of punctured (co)tangent bundle}
\label{section downstairs skeleton}

Let $M$ be a smooth manifold and $T^*M$ its cotangent bundle. Recall that the standard Liouville structure on $T^*M$ has  skeleton given by the zero section, which we will denote $\Lambda_M$.  

Let $H \In M$ be an embedded co-oriented hypersurface. Then there exists a tubular neighborhood $U$ of $H$ and a trivialization $U \cong H \times (-1,1)$. Thus,  we can embed $T^*H \into T^*M$
$$ T^*H \cong T^*H \times \{0,0\} \In T^*H \times T^*(-1,1) \cong T^*U \In T^*M. $$

Let $H_1, \cdots, H_N \In M$ be a collection of embedded co-oriented hypersurfaces that intersects transversely. We can construct a collection tubular neighborhoods $U_i$ of $H_i$, with compatible trivializations in the following sense. 

Let $[N]=\{1,\cdots, N\}$, and $H_I = \cap_{i \in I} H_i, U_I = \cap_{i \in I} U_i$. Let $\cI$ denote the collection of $I$ such that $H_I$ is not empty. 
\begin{definition}
    A compatible system of framed tubular neighborhoods for $\{H_i\}$ is the  data that, for each $i \in [N]$, a tubular neighborhood $U_i$ of $H_i$ and a diffeomorphism ('framing') 
    $$ (\pi_i,h_i): U_i \congto H_i \times (-1,1),  $$
    where $H_i = h_i^{-1}(0)$. 
    The data satisfies that for any $I \In [N]$, we have a diffeomorphism
    $$ (\pi_I, h_I): U_I \congto H_I \times (-1,1)^I$$
    where $h_I$ has components $(h_i)_{i \in I}$, and $\pi_I$ is the composition (in any order) of $\pi_i$ for all $i \in I$. 
\end{definition}

\begin{lemma}
There exists a compatible system of framed neighborhoods for $\{H_i\}$. 
\end{lemma}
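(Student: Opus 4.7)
The plan is to build the neighborhoods and framings inductively over the intersection depth of the arrangement, starting from the deepest strata and working outward. Set $k := \max\{|I| : H_I \neq \emptyset\}$. I will construct, for each $|I|$ decreasing from $k$ to $1$, an open neighborhood $V_I \supset H_I$ equipped with a diffeomorphism $V_I \congto H_I \times (-1,1)^I$ under which $H_i \cap V_I$ corresponds to the coordinate slice $\{t_i = 0\}$ for each $i \in I$, and such that whenever $J \subsetneq I$ the induced structure on $V_I \cap V_J$ agrees (after shrinking) with the one already built on $V_J$. After a coherent shrinking making $V_{J_1} \cap V_{J_2} \subseteq V_{J_1 \cup J_2}$, the tubular neighborhoods $U_i := V_{\{i\}}$ with framing $(\pi_i, h_i)$ inherited from the product decomposition will furnish the desired compatible system: for any $I \In [N]$, the intersection $U_I$ lies inside $V_I$ and $(\pi_I, h_I)$ is precisely the product projection $V_I \congto H_I \times (-1,1)^I$, from which independence on the order of composition of the $\pi_i$ is immediate.

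For the base case $|I| = k$, transversality gives a canonical splitting $NH_I \cong \bigoplus_{i \in I}(NH_i)|_{H_I}$; using co-orientations and any Riemannian metric making this splitting orthogonal, the exponential map produces $V_I$ with the required product framing. For the inductive step, assume $V_J$ has been constructed for all $|J| > |I|$. The standard tubular neighborhood theorem provides a preliminary $V'_I \congto H_I \times (-1,1)^I$, but the restriction of this product to $V_J \cap V'_I$ will in general fail to match the preexisting framing of $V_J$. Since the two product structures agree along $H_I \cap H_J = H_J$ and are both tubular, a relative tubular-neighborhood uniqueness argument produces an isotopy of $V'_I$ (rel $H_I$) that straightens it to agree with $V_J$ on a smaller neighborhood. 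Carrying out this straightening on all deeper $V_J$'s in order of decreasing $|J|$, on progressively smaller nested neighborhoods, yields the desired $V_I$.

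The main obstacle is organizing the inductive straightening coherently: modifying $V'_I$ to match $V_J$ must not destroy the compatibility with $V_{J'}$ for $|J'| > |J|$ that was already achieved at a previous step. This is handled by performing modifications in strictly decreasing order of $|J|$ and restricting each modification to a neighborhood of $H_J$ that is disjoint from the strata $H_{J'}$ where matching has already been settled. The combinatorial fact enabling this is the nesting $V_{J_1} \cap V_{J_2} \subseteq V_{J_1 \cup J_2}$ after shrinking, so two deeper strata only interact through an even deeper one where compatibility is already encoded in the inductive hypothesis.
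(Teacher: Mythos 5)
Your plan is essentially sound, but it follows a genuinely different route from the paper. You build, by downward induction on depth, a full product neighborhood $V_I \cong H_I \times (-1,1)^I$ of every stratum $H_I$, matching it with the previously built deeper structures by invoking relative uniqueness of (framed, arrangement-adapted) tubular neighborhoods, and only at the end extract $U_i := V_{\{i\}}$. The paper instead fixes the normal coordinates $h_i$ (hence the sets $U_i$ and all $U_I$, up to shrinking) once and for all at the outset, and then inductively constructs only the projections $\pi_I$: on the deeper patches $U_J$, $J \supset I$, the formula is forced, and the extension over the rest of $U_I$ is obtained as the limit map of the downward gradient flow of $F_I = \sum_{i\in I} h_i^2$ for a Riemannian metric chosen, by an auxiliary induction, to be of product form on the deeper patches. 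The point of that choice is that compatibility with the already-specified $\pi_J$ comes for free from the product metric, so no straightening isotopies are ever needed; metrics extend painlessly by partitions of unity, whereas product structures do not. Your approach buys the use of off-the-shelf theorems (tubular neighborhood existence and uniqueness rel a closed subset, isotopy extension), at the cost of exactly the coherence bookkeeping you flag.

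Two cautions on that bookkeeping. First, your phrase about restricting each modification to a neighborhood of $H_J$ ``disjoint from the strata $H_{J'}$ where matching has already been settled'' cannot be taken literally: if $H_J \cap H_{J'} = H_{J\cup J'} \neq \emptyset$, every neighborhood of $H_J$ meets $H_{J'}$. What saves you is the mechanism in your last sentence, stated correctly: perform the straightening for $J$ \emph{rel} a neighborhood of the already-settled locus, which is legitimate because the deeper structures $V_J$, $V_{J'}$, $V_{J\cup J'}$ are mutually compatible by the outer induction, so the matching conditions agree where they overlap. Second, the ``standard tubular neighborhood theorem'' must be upgraded in two ways: the preliminary $V_I'$ needs to be adapted to the arrangement (each $H_i \cap V_I'$, $i \in I$, a coordinate slice, and the framing compatible with the co-orientations), and the straightening isotopies must preserve this adaptedness; likewise your final assembly needs a closing shrink-and-rescale so that $U_I = \bigcap_{i\in I} U_i$ really maps onto the full product $H_I \times (-1,1)^I$ and the intermediate images stay in the domains of the $\pi_i$ being composed. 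These are standard refinements and comparable in rigor to what the paper itself leaves implicit, but they should be acknowledged rather than called immediate.
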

\begin{proof}
Since each $H_i$ is a co-oriented and embedded hypersurface, we can construct a tubular neighborhood $U_i$ of $H_i$ with a fibration $h_i: U_i \to (-1,1)$ such that $H_i$ is the fiber over $0$. By shrinking $U_i$, we may assume that if $I \notin \cI$, then  $U_I = \emptyset$. 

Since the collection of $H_i$ intersect transversely, for any  $I=\{i_1,\cdots,i_k\} \in \cI$, we have 
$$ h_I=(h_{i_1},\cdots,h_{i_k}): U_I \to (-1,1)^I $$
where $\vec 0 = (0,\cdots, 0)$ is a regular value of $h_I$. By shrinking $U_i$ and rescale $h_i$, we may assume $h_I$ is a submersion. 

Next, we  build the projection $\pi_I: U_I \to H_I$ inductively. We use $\cJ$ to denote the collection of indices $I$ where $\pi_I$ is not yet specified, hence initially $\cJ = \cI$. Pick a maximal element $I$ in $\cJ$, and consider the submanifold $H_I$ and its neighborhood $U_I$. Let $\cI_{>I}=\{J \in \cI \mid J\supset I\}$. We have $h_I: U_I \to (-1,1)^I$ with $H_I$ as the fiber over $\vec 0$. For any  $J \in \cI_{>I}$, by induction hypothesis, we have $\pi_J$ specified on $U_J$. Hence we can specify $\pi_I|_{U_J}$ as
$$\pi_I|_{U_J} : = (\pi_J, h_{J \RM I}): U_J \to H_J \times (-1,1)^{J \RM I} \cong H_I \cap U_J. $$

It remains to extend $\pi_I$ to the rest of $U_I$. We will use a gradient flow to do this. Define the function $f_I: (-1,1)^I \to \R$ by summing up the coordinates squared.  Let $F_I = h_I^* f_I: U_I \to \R$. Next we construct a Riemannin metric $g_J$ for all $U_J$ with $J \in \cI_{>I}$ inductively, such that $g_J$ is product form for the trivialization $(\pi_J, h_J): U_J \congto H_J \times (-1,1)^J$ and where on $(-1,1)^J$ we use the Euclidean metric. Suppose $J \in \cI_{>I}$ is a maximal element with $g_J$ unspecified, then we can first construct the metric on $H_J$ by extending the metrics on $\cup_K H_J \cap U_K$ for $J \In K \in  \cI$, then the metric on $U_J$ is uniquely specified by the product form requirement. Finally, we construct a metric $g_I$ for $U_I$ by extending the existing metrics $g_J$ on $U_J$ for $J \In I$. Then, we may run a downward gradient flow for function $F_I$ on the metric space $(U_I, g_I)$, which has Morse-Bott critical manifold $H_I \In U_I$.  We define $\pi_I(x)$ to be the limit of $x$ under the flow, this is compatible with the previously specified $\pi_I|_{U_J}$ thanks to the product form of the metric $g_J$ on $U_J$ for $J \supset I$. This finishes the construction of $\pi_I$. 

\end{proof}

Choose and fix a compatible system of framed neighborhoods $U_i$ as in the Lemma, then we have embedding $T^*H_i \into T^*M$. The goal of this section is to endow the punctured cotangent bundle $$
X:= T^*M \RM \cup_{i=1}^N T^*H_i $$ with a Weinstein structure and an explicit Weinstein skeleton. 

First we consider a local model, where $M = \R$ and $H=\{0\}$. 
Let $(x,y)$ be the Cartesian coordinate for $T^*\R$, and let $(r, \theta)$ be the polar coordinate. Let $\rho = -\log r$. We will consider two Liouville 1-forms
$$ \lambda_0 = y dx, \quad \omega_0 = -d\lambda_0 = -dy \wedge dx =  r dr \wedge d \theta $$
$$ \lambda_1 = \rho d \theta, \quad \omega_1 = -d\lambda_1 = -d \rho \wedge d \theta = r^{-1} d r \wedge d \theta.  $$
Let $C_r$ be the radius $r$ circle centered at the origin in $\R^2$. 
\begin{proposition}
    There exists a Liouville structure $\lambda$ on the punctured cotangent bundle 
    $$X := T^*M \RM T^*H = T^*\R \RM \{(0,0)\}, $$
    such that for some $0< \epsilon < 1$ and $c>0$ we have
    $$ \lambda = \begin{cases}
        \lambda_0 & r >  2- \epsilon \\
        c \lambda_1 & r < 1 + \epsilon. 
    \end{cases} $$
    and the skeleton for $(X,\lambda)$ is 
    $$ \Lambda_{-O-}  = (-\infty, -1) \cup C_1 \cup (1, \infty). $$
\end{proposition}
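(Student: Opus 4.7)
The plan is to take $\lambda$ to be an explicit radial interpolation between $\lambda_0$ and $c\lambda_1$, verify the Liouville condition by computing $-d\lambda$ in polar coordinates, and identify the skeleton by tracing the backward flow of the resulting Liouville vector field region by region.

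Fix a smooth non-increasing cutoff $\chi : (0,\infty) \to [0,1]$ equal to $1$ for $r \leq 1 + \epsilon$ and $0$ for $r \geq 2 - \epsilon$, and set $\lambda := (1-\chi(r))\lambda_0 + c\chi(r)\lambda_1$. The required boundary behavior is then built in. A direct computation in polar coordinates yields
\[-d\lambda = F(r,\theta)\,dr\wedge d\theta, \qquad F := (1-\chi)r + \frac{c\chi}{r} - \chi'(r)r^2\sin^2\theta + c\chi'(r)\log r.\]
Outside the annulus this reduces to $r\,dr\wedge d\theta$ or $(c/r)\,dr\wedge d\theta$, both positive. In the annulus, $\chi' \leq 0$ makes the first three summands nonnegative, while $c\chi'\log r \leq 0$ (since $\log r > 0$); choosing $c$ sufficiently small (depending on $\chi$ and on $\log(2-\epsilon)$) forces $F > 0$, so $-d\lambda$ is symplectic. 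This fixes the choice of $c$.

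Let $Z$ denote the resulting Liouville vector field, $\iota_Z(-d\lambda)=\lambda$. A short polar computation gives
\[Z^r = \frac{-(1-\chi)r^2\sin^2\theta - c\chi\log r}{F}, \qquad Z^\theta = \frac{-(1-\chi)r\sin\theta\cos\theta}{F},\]
which specializes to $-y\partial_y$ for $r > 2-\epsilon$ and to $-r\log r\,\partial_r$ for $r < 1+\epsilon$. The skeleton of $(X,\lambda)$ is the set of points whose backward $Z$-trajectory stays in a compact subset of $X$, and I identify it region by region. For $r>2-\epsilon$ the backward flow is $(x,y)\mapsto (x,e^t y)$, so boundedness forces $y=0$, giving the continuation of the zero section to infinity. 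For $r<1+\epsilon$ the rotationally symmetric ODE $\dot r = r\log r$ fixes the unit circle, sends $r<1$ trajectories toward the removed origin (unbounded in $X$), and sends $r>1$ trajectories outward into the annulus. In the annulus one has $Z^r\le 0$, so backward flow pushes $r$ outward, while the sign of $-Z^\theta$ pushes $\theta$ toward $\pi/2 \bmod \pi$; hence off-axis backward trajectories exit the annulus with $y\ne 0$ and escape to infinity in the outer region, while on-axis ($\sin\theta = 0$) trajectories stay on the $x$-axis and asymptote to the outer critical set $\{y=0,\ r = 2-\epsilon\}$. Assembling these three cases, the skeleton is exactly $\{y=0,\ |x|\ge 1\} \cup C_1 = \Lambda_{-O-}$.

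The main technical obstacle is positivity of $F$ in the annulus, where the negative contribution $c\chi'(r)\log r$ must be dominated by the nonnegative terms; this is what forces the freedom to choose $c$ small. Once $F>0$ is in hand, the skeleton analysis is essentially routine: the rotational symmetry of $c\lambda_1$ decouples the inner flow, the outer flow is standard cotangent, and the only real work is in the annulus, where the signs of $Z^r$ and $Z^\theta$ immediately pin down which points have bounded backward trajectory.
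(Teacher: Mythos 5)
Your proposal is correct and takes essentially the same route as the paper's proof: the same radial interpolation between $\lambda_0$ and $c\lambda_1$, the same computation of $-d\lambda = F\,dr\wedge d\theta$ with non-degeneracy for small $c$, and the same analysis of the Liouville flow to identify the skeleton (your region-by-region trajectory discussion is in fact a bit more detailed than the paper's). The one step worth tightening is the positivity of $F$ near the inner edge of the annulus, where the negative term $c\chi'\log r$ must be beaten by $c\chi/r$, which also scales with $c$, so ``take $c$ small'' is not by itself the mechanism there; the clean fix (the analogue of the paper's observation that the coefficient of $c$ is positive at $r=1+\epsilon$) is to note that $\chi'$ vanishes on $\{\chi=1\}$, hence $\chi/r+\chi'\log r>0$ on a neighborhood of that set independently of $c$, and to invoke smallness of $c$ only on the complementary compact set where $1-\chi$ is bounded below.
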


\begin{proof}
We will do a partition of unity for $ (0,\infty) = (0,2) \cup (1,\infty)$. Let $\eta(r)$ be any smooth weakly monotone increasing function which satisfies 
$$ \eta(r) = \begin{cases}
        1 & r \geq  2- \epsilon \\
        \eta(r) \in (0,1), \eta'(r)>0 & r \in (1 + \epsilon, 2- \epsilon) \\
        0 & r \leq 1 + \epsilon. 
    \end{cases} $$
We claim that, for small enough $c>0$, the combination
$$ \lambda = \lambda_0 \eta(r) + c \lambda_1 (1-\eta(r))$$
is a Liouville 1-form. We only need to check that $\omega \neq 0$ anywhere. 
$$ \omega = -d \lambda = (r \eta + \eta'(r) r^2 \sin^2 \theta + c ((1-\eta(r))\log r)') dr \wedge d\theta =: F(r,\theta) dr \wedge d\theta. $$
Over $r \in (0,1+\epsilon]$ and $r \in [2 - \epsilon,\infty)$, $\eta$ is constant, and $\omega \neq 0$. Over $r \in (1+\epsilon, 2-\epsilon)$, we have $(1-\eta(r))\log r)'$ is smooth and bounded, and is positive when $r = 1+\epsilon$
$$ (1-\eta(r))\log r)'|_{r = 1+\epsilon} = 1/(1+\epsilon) > 0$$
hence for small enough $c>0$, we have
$$ r \eta + c (1-\eta(r))\log r)' > 0, \quad r \in [1+\epsilon, 2-\epsilon]. $$
This shows that $F>0$ for all $r$, and hence $\omega$ is non-degenerate. 

Finally, we consider the skeleton of $\lambda$. Note that
$$ \lambda = (-r^2 \sin^2 \theta \eta(r) - c \log(r) (1-\eta)) d\theta + r\sin \theta \cos\theta \eta(r) dr := \lambda_\theta d\theta + \lambda_r dr$$
so the downward Liouville vector field $X_\lambda$, defined by $\iota_{X_\lambda} \omega = \lambda$,  
$$ X_\lambda = F^{-1} \lambda_\theta \d_r - F^{-1} \lambda_r \d_\theta.$$
Part of the skeleton consist of the critical loci of the flow $X_\lambda$, i.e. 
$$\{ (r,\theta) \mid \lambda = 0\} = \{ r \in [2-\epsilon, +\infty), \theta \in \{0, \pi\}\} \cup \{ r =1 \}. $$
The remaing part of the skeleton consist of unstable manifold between the critical loci, we can check that it consists of the intersection of the $x$-axis with the annulus $r \in (1, 2-\epsilon)$. 
\end{proof}

For $H \In M$ a co-oriented hypersurface and $U \cong H \times (-1,1)$ a framed tubular neighborhood, we define the abstract skeleton for $T^*M \RM T^*H$
$$ \Lambda_{M,H}:=  \overline{M \RM H} \cup_{H \times \{-1,1\}} H \times S^1 $$
where $\d (\overline{M \RM H}) = H \times \{-1,1\}$, and $\{-1,1\} \into S^1$ is given by identifying $S^1 \cong \{|z|=1\}$. We have a natural map $p: \Lambda_{M, H} \to M$, where the fiber over any point in $H$ is a $S^1$. 

For a collection of hypersurfaces $H_1, \cdots, H_N$ and framed tubular neighborhood $U_1, \cdots, U_N$, we define the total `abstract' skeleton as
$$ \Lambda_{M, \{H_i\}} := \Lambda_{M, H_1} \times_M \Lambda_{M,H_2} \times \cdots \times \Lambda_{M, H_N}. $$

\begin{proposition} \label{p:bubble-skeleton}
Let $M$ be a smooth manifold, $H_1, \cdots, H_N$ be co-oriented smooth hypersurface intersecting transversely, and let $U_i \cong H_i \times (-1,1)$ be compatibly framed tubular neighborhood of $H_i$. 
Let $X = T^*M \RM \cup_i T^*H_i$. Then there exists a Liouville structure $\lambda$ on $X$ whose skeleton is homeomorphic to the abstract skeleton $\Lambda_{M, \{H_i\}}$, and the skeleton is locally of the product form $\Lambda_{-O-}^k \times \R^{\dim M-k}$.  
\end{proposition}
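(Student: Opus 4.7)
The plan is to build the Liouville structure on $X$ by combining the one-dimensional punctured model from the preceding proposition in each transverse direction with the canonical cotangent Liouville form along the hypersurfaces, and to glue these local pieces using the compatible framings.

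First I would fix, for each $i$, a one-dimensional punctured Liouville form $\lambda^{(i)}$ on $T^*(-1,1)_i \setminus \{0\}$ of the form produced by the previous proposition, arranged so that $\lambda^{(i)} = y_i\,dx_i$ outside the annulus $1+\epsilon \le r \le 2-\epsilon$ (so that it extends as the canonical cotangent form to the rest of $T^*(-1,1)_i$). Using the framing $U_I \cong H_I \times (-1,1)^I$, I would define on each $T^*U_I \setminus \bigcup_{i\in I} T^*H_i$ the $1$-form
\[ \lambda_I := \pi_I^* \lambda_{T^*H_I} + \sum_{i \in I} h_i^* \lambda^{(i)}, \]
where $\lambda_{T^*H_I}$ is the canonical Liouville form on $T^*H_I$. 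On the complement of $\bigcup_i U_i$, I would use the canonical form $\lambda_{\mathrm{can}}$ on $T^*M$. The decomposition $T^*U_I \cong T^*H_I \times \prod_i T^*(-1,1)_i$ together with additivity of the canonical form on product cotangent bundles implies that outside small neighborhoods of the zero sections in each $T^*(-1,1)_i$ factor, $\lambda_I$ already equals $\lambda_{\mathrm{can}}|_{T^*U_I}$. Combined with the compatibility axioms $\pi_I =$ composition of the $\pi_i$ and $h_I = (h_i)_{i\in I}$, this shows that the various $\lambda_I$ are mutually consistent on overlaps $U_I \cap U_J = U_{I\cup J}$ and with $\lambda_{\mathrm{can}}$, so they glue to a global $1$-form $\lambda$ on $X$.

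Non-degeneracy of $d\lambda$ is a local question and reduces on each $T^*U_I$ to the fact that
\[ -d\lambda_I = \omega_{T^*H_I} + \sum_{i \in I}\bigl(-d\lambda^{(i)}\bigr) \]
is a direct sum of symplectic forms, using that each $-d\lambda^{(i)}$ is symplectic by the previous proposition. Since the Liouville vector field respects the product decomposition, the skeleton of $(T^*U_I \setminus \bigcup T^*H_i,\lambda_I)$ equals $H_I \times \prod_{i \in I} \Lambda_{-O-}^{(i)}$. The unbounded rays of each $\Lambda_{-O-}^{(i)}$ extend out of $U_i$ along the zero section of $T^*M$, while the circle component sits over $H_i$; assembling these fiberwise over $M$ realizes the global skeleton as the fiber product $\Lambda_{M,H_1} \times_M \cdots \times_M \Lambda_{M,H_N} = \Lambda_{M,\{H_i\}}$. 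The local product form $\Lambda_{-O-}^k \times \R^{\dim M - k}$ near a point of $H_I$ with $k = |I|$ is then immediate from the $U_I$-level description.

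The main obstacle I anticipate is the bookkeeping of the gluing: verifying that the product Liouville forms on the various $T^*U_I$ really do agree on overlaps and match $\lambda_{\mathrm{can}}$ outside the perturbed regions. This is where the full strength of the compatible framings is used, since without the assumption that $\pi_I$ factors consistently through the $\pi_i$ and that $h_I$ packages the $h_i$, the summands $h_i^*\lambda^{(i)}$ and $\pi_I^*\lambda_{T^*H_I}$ would not patch unambiguously. Once this is set up cleanly, non-degeneracy and the skeleton identification follow factor by factor from the one-dimensional proposition.
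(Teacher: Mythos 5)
Your overall strategy --- product of the one-dimensional punctured model with the standard cotangent structure along $H_I$, glued by means of the compatible framings --- is the same as the paper's, but the gluing step as you set it up does not work. First, the one-dimensional model is misquoted: it equals $c\lambda_1 = c\rho\, d\theta$ (not $y\,dx$) on the whole punctured disk $0<r<1+\epsilon$, so the region where it differs from the canonical form is all of $\{0<r<2-\epsilon\}$, not just the interpolation annulus. With the tube coordinate normalized to $(-1,1)$ this region reaches the boundary of the tube: at points with $|x_i|$ close to $1$ and $y_i\approx 0$ one has $r_i<2-\epsilon$, so $\lambda_{\{i\}}$ is still modified there and does not match $\lambda_{\mathrm{can}}$ across $\partial U_i$. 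Second, and more seriously, for $I \subsetneq J$ the forms $\lambda_I$ and $\lambda_J$ do not agree on all of $X\cap T^*U_{I\cup J}$: in a direction $j\in J\setminus I$ the form $\lambda_I$ is standard while $\lambda_J$ is modified, and these differ at every point with $0<r_j<2-\epsilon$, which occur throughout that overlap (removing $T^*H_j$ only deletes $r_j=0$). So the asserted ``mutual consistency on overlaps'' is false as stated, and the $\lambda_I$ do not glue to a global form.

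The repair is precisely the shrinking that the paper performs. Rescale the framings to $U_i\cong H_i\times(-3,3)$, so that the non-standard region $\{r_i<2-\epsilon\}$ of the $i$-th factor lies over $\{|h_i|<2\}$, and glue not over the full $T^*U_I$ but over the cover of $M$ by the sets $W_I=\bigcap_{i\in I}U_i\cap\bigcap_{i\notin I}\bigl(M\setminus\overline{U_{i,2}}\bigr)$ with $U_{i,2}=\{|h_i|<2\}$, using the modified factor only in the directions $i\in I$. On an overlap $W_I\cap W_J$, every direction $j$ in which the two recipes disagree satisfies $2<|h_j|<3$, hence $r_j>2>2-\epsilon$ on the entire cotangent fiber over such points, where modified and standard forms coincide; the same estimate gives the matching with $\lambda_{\mathrm{can}}$ outside the tubes. (Also note the summands in your $\lambda_I$ should be pulled back along the projections of the induced splitting $T^*U_I\cong T^*H_I\times\prod_{i\in I}T^*(-1,1)$, not along the base maps $\pi_I$, $h_i$.) With these corrections, your factor-by-factor argument for non-degeneracy and for the identification of the skeleton with $\Lambda_{M,\{H_i\}}$ goes through as in the paper.
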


\begin{proof}

By rescaling, we may assume $(\pi_i, h_i): U_i \cong H_i \times (-3,3)$, and let
$$ U_{i,r} = \{ x \in U_i \mid |h_i(x)| < r\}, \quad V_i = M \RM \overline{U_{i,2}}, $$
then we have an open cover $M = U_i \cup V_i$. We consider the intersection all the open covers for $i=1,\cdots, N$, for any subset $I \In [N]$, we have
$$ W_I = \bigcap_{i=1}^N  \begin{cases}
    U_i & i \in I \\
    V_i & i \notin I
\end{cases}
$$
Then, $W_I$ is an open subset of $U_I$.  Using the framing isomorphism of $U_I \cong H_I \times (-3,3)^I$, we have $W_I \cong H_I^o \times (-3,3)^I$, where $H_I^o = H_I \cap W_I$. We equip 
$$ X_I:=X \cap T^*W_I \cong T^*(H_I^o) \times (T^*(-3,3) \RM \{0\} )^I$$ 
with the product Liouville structure, where we use standard Liouville structure on $T^*(H_I^o)$ and the modified Liouville structure on the puncture space $T^*(-3,3) \RM \{0\}$. One can check that such Liouville structure on the open patches $X_I$ are compatible, and result in the desired skeleton.
\end{proof}

Having determined the skeleton, we have by \cite{GPS3} an equivalence 
\begin{equation}
 \Fuk(T^*M \setminus \bigcup T^* H_i) = \Gamma(\Lambda_{M, \{H_i\}},  \mu sh_{\Lambda_{M, \{H_i\}}} ).
\end{equation}
Strictly speaking, the definition of the Fukaya or microsheaf categories depend on a choice of `Maslov data'.  Here we take the data induced from the fiber polarization on the cotangent bundle. 

It is straightforward to compute the RHS: 

\begin{proposition} \label{downstairs microsheaves} 
Consider the map $\pi : \Lambda_{M, \{H_i\}} \to M$.  Then there is an equivalence of sheaves of categories
$$\pi_* \mu sh_{\Lambda_{M, \{H_i\}}} = \cB_0(M, \bigcup H_i).$$
In particular, passing to global sections gives: 
$$
 \Fuk(T^*M \setminus \bigcup T^* H_i) = \Gamma(\Lambda_{M, \{H_i\}},  \mu sh_{\Lambda_{M, \{H_i\}}} )= \Gamma(M, \cB_0(M, \bigcup H_i)).$$
\end{proposition}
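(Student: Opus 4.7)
The plan is to observe that both sides are constructible cosheaves with respect to the stratification of $M$ induced by $\bigcup H_i$, and that the right-hand side is defined, near a codimension-$k$ stratum, as a pullback of $\cB_0^{\boxtimes k}$. So I would first verify that the left-hand side has the analogous product structure. By Proposition \ref{p:bubble-skeleton} (and its proof, which constructs the Liouville form as a product on each open patch $W_I$), the Weinstein manifold $X \cap T^*W_I$ is symplectomorphic to the product $T^*(H_I^\circ) \times (T^*(-3,3)\setminus\{0\})^{|I|}$, with skeleton the product of the zero section in the first factor and $|I|$ copies of $\Lambda_{-O-}$ in the second. Since microsheaves satisfy a K\"unneth formula for products of Weinstein manifolds, the computation of $\pi_*\mu sh$ on a neighborhood of a stratum reduces to the two basic cases: (i) the zero section in $T^*H_I^\circ$ (where the answer is visibly the constant cosheaf of $\Z$-modules), and (ii) $\Lambda_{-O-} \subset T^*\R \setminus \{0\}$ pushed forward to $\R$.

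\textbf{The one-dimensional case.} For case (ii), I would stratify the abstract skeleton $\Lambda_{-O-} = \overline{M\setminus H}\, \cup_{H\times\{-1,1\}}\, H \times S^1$ into the two open rays, the upper and lower open arcs of the circle, and the two trivalent vertex points where a ray meets the circle. The pushforward cosheaf is then tautologically locally constant on $\R \setminus 0$ with stalk $\Z$-mod (from microsheaves on a smooth half-line worth of zero section). To compute the stalk at $0$, I would take a small neighborhood, whose preimage in $\Lambda_{-O-}$ is the circle together with two short ray-segments. Choosing as test objects the two cocore disks of the two ray-segments gives two generators (the two nodes of the quiver); the two open arcs of the circle give morphisms $x$ and $y$ between these generators via the standard microlocal interpretation of a smooth edge connecting two cocores. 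The corestriction maps to $\R_{<0}$ and $\R_{>0}$ are the inclusions of the respective ray-generators, matching the cosheaf structure of $\cB_0$.

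\textbf{The main obstacle: relations at the vertex points.} The hardest step, and the heart of the proof, is establishing the relations $xy = yx = 0$ at the stalk at $0$. These encode the local microsheaf structure at the two trivalent vertices of $\Lambda_{-O-}$, where a ray meets two circular arcs. I expect to handle this in either of two equivalent ways. The first is a direct microlocal computation: near such a vertex the skeleton is a standard Lagrangian trivalent singularity in a surface, and a local generator computation (for instance via Morse theoretic gluing along the lines of \cite{gammage-shende-hypersurfaces, Zhou-skeleton}) shows that the composition of the ray-to-arc transition with the arc-to-ray transition on the opposite side vanishes in the microsheaf category. The second is to invoke the sheaf-Fukaya equivalence of \cite{GPS3} directly and compute the wrapped Fukaya category of $T^*\R \setminus \{0\}$, verifying that any Reeb chord forcing $xy$ or $yx$ to be nonzero fails to lie in this partial-wrapping skeleton; this is a one-dimensional analog of the pants computation of \cite{nadler-pants}. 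Either route yields $xy = yx = 0$, completing the identification with $\cB_0$ at the origin.

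\textbf{Assembly and the Fukaya-category statement.} Combining the K\"unneth reduction with the local identification $\pi_*\mu sh_{\Lambda_{-O-}} \cong \cB_0$ on $(\R, 0)$ yields the global equivalence $\pi_* \mu sh_{\Lambda_{M,\{H_i\}}} \cong \cB_0(M, \bigcup H_i)$ of constructible cosheaves, since both sides have the same local product descriptions and the same corestriction maps on overlaps. Taking global sections and invoking the sheaf-Fukaya comparison of \cite{GPS3} with the cotangent fiber polarization then gives the stated description of $\Fuk(T^*M \setminus \bigcup T^*H_i)$ as $\Gamma(M, \cB_0(M, \bigcup H_i))$.
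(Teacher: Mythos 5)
Your proposal is correct and follows essentially the same route as the paper: the paper's proof is precisely the observation that the pushforward is locally a product of the one-dimensional computation for $\Lambda_{T^*\R\setminus(0,0)}$, which it then cites as known (e.g.\ \cite{nadler-pants}) rather than carrying out. Your extra detail on the one-dimensional case --- cocores at the two rays as the nodes, the circle arcs giving $x$ and $y$, and the vanishing relations $xy=yx=0$ at the trivalent vertices, with the fiber-polarization Maslov data --- is exactly the content of that cited calculation, so nothing in your argument diverges from the paper's.
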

\begin{proof}
    This is locally a product of the one-dimensional calculation of microsheaves on  
    $\Lambda_{T^* \R \setminus (0, 0)}$.  
    This is straightforward and known to have the asserted value; 
    see e.g. \cite{nadler-pants}. 
\end{proof}

\section{Proof of  Theorem \ref{multiplicative main theorem}}
\label{section proof of main theorem}

Fix as in Section \ref{section mht} the defining data of a multiplicative hypertoric variety, and in particular, the integral hyperplane arrangement $\cH \subset \cflavorparone^{\vee}$
from \eqref{hypertoric hyperplane arrangement}.  Note the integer lattice $L$ is identified with the characters of $\flavor^\vee$, and so the group-ring of this lattice is identified with the functions on $\flavor^{\vee}$.  

By \Cref{global algebra cosheaf reduction}, we have 
$$ \cB(\cflavorparone^\vee, \cH)-mod \otimes_{\Z[L]} \Z = \cB_0(\cflavorparone^\vee, \cH)-mod$$

We take global sections, and apply the B-model  calculation from \cite{gammage-mcbreen-webster} (recalled in \Cref{GMW B-model}) on the LHS,  and apply \Cref{downstairs microsheaves} on the RHS.  This
gives: 
$$\Coh(\mht_{1, \theta})\otimes_{\Z[L]} \Z = \Fuk(\flavorparone^\vee \setminus \bigcup H_i).$$

Recall the moment map 
$\mu^{\flavor} : \mht_{1, \theta} \to \flavor^{\vee}$, and the identification of the above $\Z[L]$ with the functions on $\flavor^{\vee}$.  It follows formally that 
$\Coh(\mht_{1, \theta})\otimes_{\Z[L]} \Z  = \Coh(\mu^{-1}(1))$.
This completes the proof of Theorem \ref{multiplicative main theorem}. $\qed$

\section{Group actions and the proof of Theorem \ref{theorem right action}} \label{section right action}

More or less by definition, if $G$ is a topological group, then a `continuous' $G$ action on $C$ is a locally constant sheaf over $BG$ with stalk $C$.  Recall also that we can describe locally constant sheaves on any (sufficiently nice) space $Z$ as follows.  Consider the category $\Delta(Z)$ of singular simplices in $Z$, with morphisms given by inclusions of boundary simplices.  Then local systems on $Z$ are the same as functors out of $\Delta(Z)$ carrying all morphisms to isomorphisms.  

For example, suppose $T$ is a topological space and $G$ a group acting on $T$.  Let us write $Sh_{[G]}(T) \subset Sh(T)$ for the full subcategory on objects which are locally constant along the $G$-orbits.
There is a local system of categories over $BG$ whose fiber is $Sh_{[G]}(T)$.   One constructs it as follows.  We will regard $BG$ as classifying bundles with connection (note the space of connections is contractible), so a simplex $\Delta \in \Delta(BG)$ is the same as a $G$-bundle with connection over the underlying simplex $\Delta$.  

We define a functor $\Delta(BG) \to Categories$ by sending such a simplex $\Delta$ to the category of sheaves over the total space of the corresponding $T$-bundle which are fiberwise in $Sh_{[G]}(T)$ and are locally constant over $T$ with respect to parallel transport along the induced $G$-connection.   Restriction of sheaves provides the structure maps giving a functor from $\Delta(BG)$.  The restriction maps evidently induce equivalences of categories and hence the resulting functor describes a locally constant sheaf of categories over $BG$.  We may also restrict the entire construction to e.g. the full subcategory of $Sh_{[G]}(T)$ constructible with respect to a some particular $G$-invariant stratification of $T$, and in particular, always to the category of local systems on $T$.  

In general, given a locally constant sheaf of categories over $BG$ with stalk $C$, one gets maps $G \to \End(C)$ and $\Omega G  \to \End(1_C)$.  In particular, $\Omega G$ acts on every object in $C$.  In the case
above of $Sh_{[G]}(T)$, this action is computed more directly as follows.  Given a loop in $G$, form the trivial $G$-bundle over $S^1$, with connection whose parallel transport implements the loop in $G$.  The induced connection on the trivial $T$-bundle over $S^1$ allows to parallel transport objects in $Sh_{[G]}(T)$.  Said parallel transport induces an automorphism of the object.  

\begin{example}
    Consider $G = T = S^1$.  Then $Sh_{[S^1]}(S^1)$ is the category of local systems on $S^1$.  The action of $1 \in \Z = \Omega S_1$ on a local system is, per the above description, given by parallel transport along a full rotation of the circle, or in other words, by the monodromy of the local system. 
\end{example}

More generally, given $F \in Sh_{[G]}(T)$ and a loop $\gamma \in \Omega G$, $\gamma: F \to F$ is given on stalks $\gamma: F_x \to F_x$ as the monodromy of the local system $F|_{\gamma \cdot x}$ on $\gamma \cdot x \cong S^1$. 

\begin{example} \label{equivariance calculation}
    Consider $G=\C^*$, $T = \C$.  Then, 
    we recall Beilinson's identification 
    $Sh_{[\C^*]}(\C) = \mathcal{B}-mod$ \cite{Beilinson-glue}, sending the nearby and vanishing cycles at the origin to the nodes at the quiver, and the $t$ and $\tau$ for their respective monodromies.  In particular, $1 \in \Z = \Omega \C^*$ acts by multiplication on the nodes by $t$ and $\tau$, respectively.  
\end{example}

Finally, we recall that the category of $G$-equivariant sheaves can be computed as
$$Sh(T)^G \cong Sh_{[G]}(T) \otimes_{C_* \Omega G} \Z$$

\vspace{2mm}

We want an analogous construction where $G = Aut(X)$ is some enhanced version of the exact symplectomorphism group of a Liouville manifold $X$; the enhancement concerns the topological information needed to define Fukaya or microsheaf categories (i.e. what is called Maslov data in \cite{nadler-shende}).  
In this case, the points of $B Aut(X)$ are then exact symplectomorphisms of $X$; the 1-simplices are paths of symplectomorphisms, etc.  

One such construction was introduced by Oh and Tanaka in \cite{oh-tanaka-smooth, oh-tanaka}, by generalizing results around Floer homology (and wrapping) to the case of bundles of Liouville manifolds over a base.  One important technical innovation was to show that, for the purpose of describing locally constant sheaves of categories over $B Aut(X)$, one can work with a restricted collection of simplices 
$\Delta^{sm}(B Aut(X))$, consisting of those for which the symplectomorphsims are smoothly varying and have suitably trivialized collars near the boundary.  Here we will use a different, though presumably equivalent, approach; ours is chosen to interact more transparently with the sheaf-Fukaya equivalence of \cite{GPS3}.  

The task is to construct a functor from $\Fuk: \Delta^{sm}(B Aut(X)) \to A_\infty-cat$, which in particular sends 0-simplices to the Fukaya category of $X$, and all morphisms in $\Delta^{sm}(B Aut(X))$ to isomorphisms.  We proceed as follows.  An element $\Delta \in \Delta^{sm}(B Aut(X))$ has associated $X$-bundle $X_\Delta \to \Delta$ with connection given by Hamiltonian symplectormorphisms.  The Hamiltonian can be used to produce a canonical exact symplectic structure on the pullback of the $X$-bundle to $X_{T^* \Delta} \to T^* \Delta$ such that symplectic parallel transport along $\Delta$ realizes the original connection.  Note that $X_{T^* \Delta}$ is a Liouille sector with corners. We define $\Fuk(\Delta) := \Fuk(X_{T^* \Delta})$.  
For $\Delta' \to \Delta$ the inclusion of a collar neighborhood of a boundary stratum, the boundary trivializations involved in the definition of $\Delta^{sm}$ ensure an inclusion of sectors $X_{T^* \Delta'} \to X_{T^* \Delta}$; it is evident that the \cite{GPS1} covariant functor $\Fuk(X_{T^* \Delta'}) \to \Fuk(X_{T^* \Delta})$ is an equivalence.  This defines the desired functor, along with checking that it determines a local system of categories over $B Aut(X)$.  We denote this local system by $\underline{\Fuk}(X)$.    
Let us remark that the same construction applies to a stopped Liouville manifold $(X, \mathfrak{f})$ and the exact symplectomorphisms which preserve the stop, $Aut(X, \mathfrak{f})$.  

For $G \subset Aut(X)$, we may either perform the same construction restricted to $G$, or equivalently pull back along $BG \to BAut(X)$, to obtain a local system of categories over $BG$, which we will also denote $\underline{\Fuk}(X)$.  Let us observe that the formation of $\underline{\Fuk}(X)$ is evidently compatible with sectorial descent \cite{GPS2} for a $G$-invariant sectorial cover. 

Suppose $X$ is sufficiently Weinstein.  Then from \cite{nadler-shende}, we may associate a category of microsheaves on the skeleton of $X$, which we denote $\mathfrak{Sh}(X)$; note e.g. $\mathfrak{Sh}(T^*M) = \Loc(M)$.  More generally, if 
$(X, \mathfrak{f})$ is sufficiently Weinstein, there is a 
corresponding category of microsheaves on the relative skeleton. 
These categories were shown to match the corresponding Fukaya categories in \cite{GPS3}.  
 
If we restrict the above discussion of equivariant structures to constructions preserving the property of sufficiently Weinstein, we may apply $\mathfrak{Sh}$ to obtain a sheaf of categories $\underline{\mathfrak{Sh}}(X)$ over $B Aut^{Wein}(X)$.  By functoriality of \cite{GPS3}, 
$$\underline{\mathfrak{Sh}}(X) \cong \underline{\Fuk}(X)|_{B Aut^{Wein}}$$  While $Aut^{Wein}(X)$ is poorly understood, it certainly contains the exact symplectomorphisms whose action preserves the Liouville vector field, which will suffice for our purposes. 

Finally, let us consider the case $X = T^* M$.  Then we have the natural $Diff(M)$ action on $T^*M$; note it preserves the Liouville flow, and thus we may consider $\underline{\mathfrak{Sh}}(T^* M)|_{BDiff(M)}$.  On the other hand, the constructions on sheaf categories described in the opening paragraphs of this section lead to a local system of categories $\underline{\mathrm{Loc}}(M)$ over $BHomeo(M)$, with fiber $\mathrm{Loc}(M)$.  It follows directly from the constructions that
$$\underline{\mathfrak{Sh}}(T^* M)|_{BDiff(M)} \cong \underline{\mathrm{Loc}}(M)|_{BDiff(M)}$$
More generally, if we fix a Whitney stratification $\mathcal{S}$ and write $Diff(M, \mathcal{S})$ for diffeomorphisms preserving the stratification and $N^* \mathcal{S}$ for the union of conormals to the strata, then we have similarly
$$\underline{\mathfrak{Sh}}(T^* M, N^* \mathcal{S} )|_{BDiff(M, \mathcal{S})} \cong \underline{\mathrm{Sh}}_{\mathcal{S}}(M)|_{BDiff(M, \mathcal{S})}$$

These constructions and comparisons in place, we proceed to the proof of Theorem \ref{theorem right action}:

\begin{proof}[Proof of Theorem \ref{theorem right action}]
    As described in Section \ref{section mht}, there is a Hamiltonian action of $U(1)^n$ on $\mht_{\parone,0}$.  From the above considerations, we obtain an enrichment 
    of $\Fuk(\mht_{\parone,0})$ by 
    $C_* \Omega U(1)^n = \Z[s_1^{\pm}, \ldots, s_n^{\pm}]$.  On the other hand, we have a similar enrichment of $\Coh(\mht_{1, \theta})$ via pulling back functions under the moment map.  The task is to show that these enrichments match under the mirror symmetry $\Fuk(\mht_{\parone,0}) \cong \Coh(\mht_{1, \theta})$ of \cite{gammage-mcbreen-webster} (recalled as Theorems \ref{GMW mush to quiver} and \ref{GMW B-model} above).  By the compatibility asserted in Theorem \ref{GMW B-model}, it suffices to show that, under the equivalence of Theorem \ref{GMW mush to quiver}, the geometrically constructed action on the Fukaya category is carried to the explicitly described central action from the quiver description \eqref{central action on B}.  
    
    We proceed as follows.  The space $\mht_{\parone,0}$ admits a sectorial cover compatible with the $U(1)^n$ action, such that the skeleton of each stratum is identified as some
    $(\C^* \subset T^* \C^*)^a \times (\C \sqcup T^*_0 \C \subset T^* \C)^{n-a}$, with the $U(1)^n$ action lifted from the base.   From the above discussion, we may compute the action of $C_* \Omega U(1)^n$ locally on such charts, and then carry out the calculation after passing to sheaves.  Now the result follows from taking products of Example \ref{equivariance calculation}.
\end{proof}

\newpage

\bibliographystyle{plain}
\bibliography{refs}

\end{document}